\theoremstyle{definition}
\newtheorem{defi}{Definition}
\theoremstyle{plain}
\newtheorem{thm}{Theorem}
\newtheorem{crl}{Corollary}
\newtheorem{lmm}{Lemma}
\theoremstyle{remark}
\theoremstyle{plain}
\newtheorem{prp}{Proposition}
\theoremstyle{plain}
\theoremstyle{plain}
\newenvironment{customthm}[1]
  {\innercustomthm}
  {\endinnercustomthm}
\patchcmd\maketitle{\def\@makefnmark{\rlap{\@textsuperscript{\normalfont\@thefnmark}}}}{}{}{}
\def\thanksAAffil#1{% <--- These %'s are necessary for spacing
  \footnotemarkAAffil\protected@xdef\@thanks{\@thanks%
        \protect\footnotetextAAffil[\the \c@footnoteAAffil]{#1}}%
}
\def\thanksANote#1{%
  \footnotemarkANote%
  \protected@xdef\@thanks{\@thanks%
        \protect\footnotetextANote[\the \c@footnoteANote]{#1}}%
}
\begin{document}
\title{Persistence Diagram Estimation : Beyond Plug-in Approaches}
\author{Hugo Henneuse \thanksAAffil{Laboratoire de Mathématiques d'Orsay, Université Paris-Saclay, Orsay, France}$^{\text{ ,}}$\thanksAAffil{DataShape, Inria Saclay, Palaiseau, France}\\\href{mailto:hugo.henneuse@universite-paris-saclay.fr}{hugo.henneuse@universite-paris-saclay.fr}}
\maketitle
\begin{abstract}
\noindent Persistent homology is a tool from Topological Data Analysis (TDA) used to summarize the topology underlying data. It can be conveniently represented through persistence diagrams. Observing a noisy signal, common strategies to infer its persistence diagram involve plug-in estimators, and convergence properties are then derived from sup-norm stability. This dependence on the sup-norm convergence of the preliminary estimator is restrictive, as it essentially imposes to consider regular classes of signals. Departing from these approaches, we design an estimator based on image persistence. In the context of the Gaussian white noise model, and for large classes of piecewise-constant signals, we prove that the proposed estimator is consistent and achieves parametric rates.
\end{abstract}
\section*{Introduction}
Topological Data Analysis (TDA) is a field that aims to provide representations to describe the ``shape'' of data. A central tool in TDA is persistent homology and its representation through persistence diagrams. Persistent homology permits to encode the evolution of topological features (in the homology sense) along a family of nested spaces, called filtration. Moving along indices, topological features (connected components, cycles, cavities, ...) can appear or die (existing connected components merge, cycles or cavities are filled, ...). Persistence diagrams then offer a practical, multiscale, summary of the topology underlying data. Although in its early years research on persistent homology has primarily focused on deterministic settings, it didn't take long for statistical questions to emerge. A prominent topic is the estimation of persistence diagram and, more specifically, the investigation of convergence properties of considered estimators.\\
A first historical success in this direction is \cite{BubenikKim07}, that formalizes the problem of persistence diagram estimation from the (sub or super) level sets of a density, in several parametric settings. Notably, when observing points sampled from a Mises-Fischer distribution with fixed parameter $\kappa$ on $S^{p-1}$, they show that a plug-in estimator from the maximum-likelihood estimation of $\kappa$ permits to recover consistently the persistence diagram of the density.\\
 \cite{Fasy} consider the problem of estimating the persistent homology of the support $M$ of a distribution $P$ (associated to its offset filtration $(B_{2}(M,r))_{r\geq 0}$) while observing sampled points from $P$. Under geometric control over both $M$ and $P$ they show that the persistence diagram associated to the offsets of the sampled points provides a consistent estimator. They additionally derive confidence sets, showing that it suffices to remove points lying on a small band around the diagonal of the persistence diagram to separate topological noise from true topological features. Their analysis exploits (a variant of) the sup-norm stability theorem \citep{Baranikov94,CSEH2005,Chazal2009}, showing that, the set of sampled points converges to $M$ in Hausdorff distance. On the same problem, \cite{ChazalGlisseMichel} provide consistent estimators and show that they achieve (nearly) minimax rates under more general assumptions. This work also exploits the sup-norm stability.\\
 In the context of the non-parametric regression, closer to the setting considered in this work, \cite{BCL2009} study a plug-in estimator based on kernel estimation for the persistence diagram associated to sublevel sets of the regression function. Exploiting the sup-norm stability again, they prove the consistency of this estimator and that it achieves minimax rates on Hölder spaces. These rates coincide with the known minimax rates for the estimation of the regression function in sup-norm. It essentially means that the estimation of the persistence diagram from sublevel sets is as difficult as estimating the regression function in sup norm over Hölder spaces.\\
 As highlighted, a common aspect of all the previously cited works is that they adopt plug-in approaches and convergence results are established through the sup-norm stability theorem (or variants of the sup-norm stability theorem). This dependence on the convergence of the preliminary estimators is limiting as it essentially imposes to consider regular objects (signal, regression function, density, ...). This limitation motivates investigations of persistent homology inference over wider classes, typically where consistent estimation of such objects in sup-norm is no longer possible. A difficulty is that it requires to departs from approaches that rely on sup-norm stability.\\
 A first step in this direction is proposed by \cite{Bobrowski}. For the non-parametric regression and the density model, they propose a non-plug-in approach to infer the persistence diagrams coming from the superlevel sets filtration of a function $f$, respectively, the regression function and the density. They consider, for all, $L\in\mathbb{R}$, $\widehat{\mathcal{D}}_{L}$ an estimator of the superlevel set $f^{-1}([L,+\infty[)$ based on kernels. Instead of directly estimating the underlying persistent modules by taking the homology groups of the filtration $(\widehat{\mathcal{D}}_{L})_{L\in\mathbb{R}}$, they consider, for all $s\in\mathbb{N}$ and $\lambda\in\mathbb{R}$, the map :
 $$i_{s,L}:H_{s}\left(\widehat{\mathcal{D}}_{L+\epsilon}\right)\rightarrow \widehat{\mathcal{D}}_{L-\epsilon}$$
 induced by the inclusion $\widehat{\mathcal{D}}_{L+\epsilon}\subset \widehat{\mathcal{D}}_{L-\epsilon}$, with $\epsilon>0$ a chosen precision parameter. And then consider the modules $(\operatorname{Im}(i_{s,L}))_{L\in\mathbb{R}}$. This additional step of constructing an image persistent module can be thought as a topological regularization, which allows considering wide classes of signal. In particular, supposing that $f$ is bounded, and the module associated to its superlevel sets filtration is $q-$tame (see definition \ref{def: q-tame} in Section \ref{Background section}), they show that the bottleneck distance (see definition \ref{def: bottleneck} in Section \ref{Background section}) between the diagram induced by the proposed image persistent module and the true persistence diagram is bounded by $5\varepsilon$ with high probability (depending also on $\varepsilon$). Their analysis relies on a weaker notion of stability : the algebraic stability \citep{Chazal2009}. Yet, in this framework the optimal calibration of $\epsilon$ is not obvious, and thus this work does not permit to establish convergence rates or prove proper consistency. The difficulty stems from their framework being too wide, allowing for wildly irregular functions. Typically, it is easy to construct two functions satisfying the $q-$tameness and boundedness assumptions, that only differ on a null set (and thus statistically indistinguishable), that have (arbitrarily) far persistence diagrams in bottleneck distance.\\
 Hence, it is interesting to identify narrower classes on which we can provide stronger and more precise results. Recently, in \cite{Henneuse24a}, we proposed a new approach that also departs from the sup norm stability. This work studies the inference of persistence diagram, from a minimax perspective, for the Gaussian white noise model and the non-parametric regression. Although we consider a plug-in estimator, our analysis of convergence properties did not rely on sup-norm stability but on algebraic stability. We introduced and studied classes of piecewise-Hölder continuous functions with discontinuities set having a positive reach. The reach \citep{Fed59} is a popular curvature measure in geometric inference, that can be thought as a way to describe the geometric regularity of a set. Under this reach assumption, we manage to show that a histogram plug-in estimator permits to achieve the well known minimax rates for Hölder-continuous functions. Still, the positive reach assumption imposes some limitations on the shape of discontinuity sets, typically it does not allow corners or multiple points (i.e. self intersections of the discontinuities set). Relaxation of this assumption was already discussed in this earlier work. We highlighted that it would imply to consider other estimators, as, even in the noiseless setting, the histogram approximation falls short on simple examples (see Figure \ref{fig:reach }). This serves as a motivation to depart from plug-in approaches and as the starting point for this new work.\\\\
\noindent We consider the Gaussian white noise model given by the following stochastic equation,
\begin{equation*}
\label{white noise model}
    dX_{t_{1},...,t_{d}}=f(t_{1},...,t_{d})dt_{1}...dt_{d}+\theta dW_{t_{1},...,t_{d}}
\end{equation*}
with $W$ a $d-$parameters Wiener field, $f:[0,1]^{d}\rightarrow \mathbb{R}$ a signal and $\theta\geq0$ the level of noise. In this context, our goal is to estimate $\operatorname{dgm}(f)$, the persistence diagram coming from the sublevel sets of $f$.\\\\
For a set $A\subset[0,1]^{d}$, $\overline{A}$ we denote its adherence and $\partial A$ its boundary. We suppose that $f$ verifies the following assumptions :
\begin{itemize}
\item \textbf{A1.} f is a piecewise constant function, i.e. there exist $M_{1},...,M_{l}$ open sets of $[0,1]^{d}$ and $\lambda_{1}$, ..., $\lambda_{l}$ in $\mathbb{R}$ such that $\bigcup_{i=1}^{l}\overline{M_{i}}=[0,1]^{d}$ and $f|_{M_{i}}=\lambda_{i}$, $\forall i\in\{1,...,l\}$.
\item \textbf{A2}. $f$ verifies, $\forall x_{0}\in [0,1]^{d}$,
$$\underset{x\in \bigcup\limits_{i=1}^{l}M_{i}\rightarrow x_{0}}{\liminf}f(x)=f(x_{0}).$$
In this context, two signals, differing only on a null set, are statistically undistinguishable. Persistent homology is sensitive to point-wise irregularity, two signals differing only on a null set can have persistence diagrams that are arbitrarily far. Assumption \textbf{A2} prevents such scenario. 
\item \textbf{A3.} Let $\mu\in]0,1]$ and $R_{\mu}>0$, for all $I\subset \{1,...,l\}$
$$\operatorname{reach}_{\mu}\left(\bigcup\limits_{i\in I}\partial M_{i}\right)\geq R_{\mu}$$
with $\operatorname{reach}_{\mu}$ denoting the $\mu-$reach.
\end{itemize}
The class of functions verifying \textbf{A1}, \textbf{A2} and \textbf{A3} is denoted $S_{d}(\mu,R_{\mu})$. For our purpose, we show in Appendix \ref{appendix tameness} that persistence diagrams of signals in $S_{d}(\mu,R_{\mu})$ are well-defined. Compared to the classes, we introduced \cite{Henneuse24a}, our analytic assumption over $f$ on the regular region are stronger, but we significantly relax the assumption on the geometry of the discontinuities allowing multiple points and corners in the discontinuities set.
\subsection*{Contribution}
We complete the main result of \cite{Henneuse24a}. Unlike the classes considered there, over the classes $S_{d}(\mu,R_{\mu})$, a plug-in estimator from histogram estimation is no longer consistent (see Figure \ref{fig:reach }). To overcome this issue, we propose, a non-plug-in estimator of the persistence diagram of $f$, based on image persistence \citep{CSEHM2009}, denoted $\widehat{\operatorname{dgm}(f)}$. Over $S_{d}(\mu,R_{\mu})$, we show that this estimator is consistent and achieves parametric convergence rates. More precisely, our main result is the following theorem.
\begin{customthm}{1}
\label{th1}
There exist $\Tilde{C_{0}}$ and $\Tilde{C_{1}}$ such that, for all $t>0$,
$$\mathbb{P}\left(\sup\limits_{f\in S_{d}(\mu,R_{\mu})}d_{b}\left(\widehat{\operatorname{dgm}(f)},\operatorname{dgm}(f)\right)\geq t\theta\right)\leq \Tilde{C_{0}}\exp\left(-\Tilde{C_{1}}t^{2}\right).$$ 
\end{customthm}
\noindent The paper is organized as follows. Section \ref{Background section} recalls the necessary background on geometric measure theory and persistent homology, Section \ref{Procedure description sec} describes our estimation procedure, and Section \ref{proof TH1} is dedicated to the proof of Theorem \ref{th1}. Proofs of technical lemmas can be found in appendix.
\section{Background}
\label{Background section}
This section provides the necessary background to follow this paper.
\subsection{Persistent Homology}
\label{Background section 2}
We here present briefly some notions related to persistence homology. For a broader overview and visual illustrations of persistent homology, we recommend \cite{chazal2021introduction}. For detailed and rigorous constructions, see \cite{chazal2013}. Additionally, since the construction discussed here involves (singular) homology, the reader can refer to \cite{Hatcher}.
\begin{defi}
Let $\Lambda \subset \mathbb{R}$ be a set of indices. A \textbf{filtration} over $\Lambda$ is a family $\left(\mathcal{K}_\lambda\right)_{\lambda \in \Lambda}$ of topological spaces satisfying, $\forall \lambda, \lambda^{\prime} \in \Lambda, \lambda \leqslant \lambda^{\prime}$,
$$
\mathcal{K}_\lambda \subset \mathcal{K}_{\lambda^{\prime}}
.$$
\end{defi}
\noindent A typical filtration that we will consider in this paper is, for a function $f:\mathbb{R}^{d}\rightarrow\mathbb{R}$, the family of sublevel sets $\left(\mathcal{F}_{\lambda}\right)_{\lambda\in\mathbb{R}}=(f^{-1}(]-\infty,\lambda]))_{\lambda\in\mathbb{R}}$. The associated family of homology groups of degree $s\in \mathbb{N}$, $\mathbb{V}_{f,s}=\left(H_{s}\left(\mathcal{F}_{\lambda}\right)\right)_{\lambda\in\mathbb{R}}$, equipped with $v_{\lambda}^{\lambda^{'}}$ the linear application induced by the inclusion $\mathcal{F}_{\lambda}\subset\mathcal{F}_{\lambda^{\prime}}$, for all $\lambda\leq \lambda^{'}$, forms a persistence module. To be more precise, in this paper, $H_{s}(.)$ is the singular homology functor in degree $s$ with coefficients in a field (typically $\mathbb{Z}/2\mathbb{Z}$). Hence, $H_{s}\left(\mathcal{F}_{\lambda}\right)$ is a vector space. \\\\
Still, the estimator we propose in Section \ref{Procedure description sec}, relies on image module, that does not come (directly) from sublevel sets filtration. Hence, we need the following, more general, definition.
\begin{defi}
 Let $\Lambda\subset \mathbb{R}$ be a set of indices. A \textbf{persistence module} over $\lambda$ is a family $\mathbb{V}=\left(\mathbb{V}_\lambda\right)_{\lambda \in \Lambda}$ of vector spaces equipped with linear application $v_\lambda^{\lambda^{\prime}}: \mathbb{V}_\lambda \rightarrow \mathbb{V}_{\lambda^{\prime}}$ such that, $\forall \lambda \leqslant \lambda^{\prime} \leqslant \lambda^{\prime \prime} \in \Lambda$,
$$ v_\lambda^\lambda=i d$$
and 
$$v_{\lambda^{\prime}}^{\lambda^{\prime \prime}} \circ v_\lambda^{\lambda^{\prime}}=v_\lambda^{\lambda^{\prime \prime}}.$$
\end{defi}
\noindent Under $q-$tameness of the persistence module, it is possible to show that the algebraic structure of the persistence module encodes exactly the evolution of the topological features along the indices $\Lambda$.
\begin{defi}
\label{def: q-tame}
A persistence module $\mathbb{V}$ is said to be \textbf{$q$-tame} if $\forall \lambda<\lambda^{\prime} \in \Lambda, \operatorname{rank}\left(v_\lambda^{\lambda^{\prime}}\right)$ is finite. By extension, when considering the persistence modules $(\mathbb{V}_{f,s})_{s\in\mathbb{N}}$ coming from the sublevels sets filtration of a real function $f$, we say that $f$ is $q-$tame if $\mathbb{V}_{f,s}$ is for all $s\in\mathbb{N}$.
\end{defi}
\noindent Furthermore, the algebraic structure of the persistence module can be summarized by a collection $\{(b_{i},d_{i}), i\in I\}\subset\overline{\mathbb{R}}^{2}$, which defines the \textbf{persistence diagram}. Following previous remarks, $b_{i}$ corresponds to the birth time of a topological feature, $d_{i}$ to its death time and $d_{i}-b_{i}$ to its lifetime. For a detailed construction of persistence diagrams, see \cite{chazal2013}. 

\begin{figure}[H]
\centering
\begin{subfigure}{.5\textwidth}
  \centering
  \includegraphics[scale=0.5]{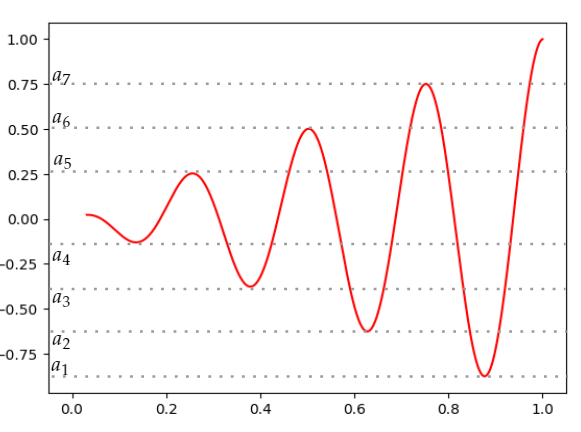}
  \caption{graph of $f$}
  \label{fig:sub1}
\end{subfigure}%
\begin{subfigure}{.5\textwidth}
  \centering
  \includegraphics[scale=0.5]{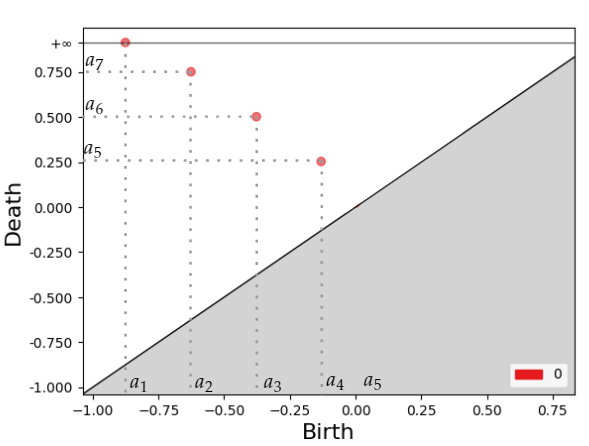}
  \caption{$H_{0}$-persistence diagram of $f$}
  \label{fig:sub2}
\end{subfigure}
\caption{Graph of $f(x)=x\cos(8\pi x)$ over $[0,1]$ and the persistence diagram associated to its sublevel sets filtration. $a_{1}$, ..., $a_{4}$ correspond to local minima of $f$ and thus birth times in $\operatorname{dgm}(f)$. $a_{5}$, ..., $a_{7}$ correspond to local maxima of $f$ and thus death times in $\operatorname{dgm}(f)$.}
\label{fig:test}
\end{figure}

\begin{figure}[H]
\centering
\begin{subfigure}{.5\textwidth}
  \centering
  \includegraphics[scale=0.3]{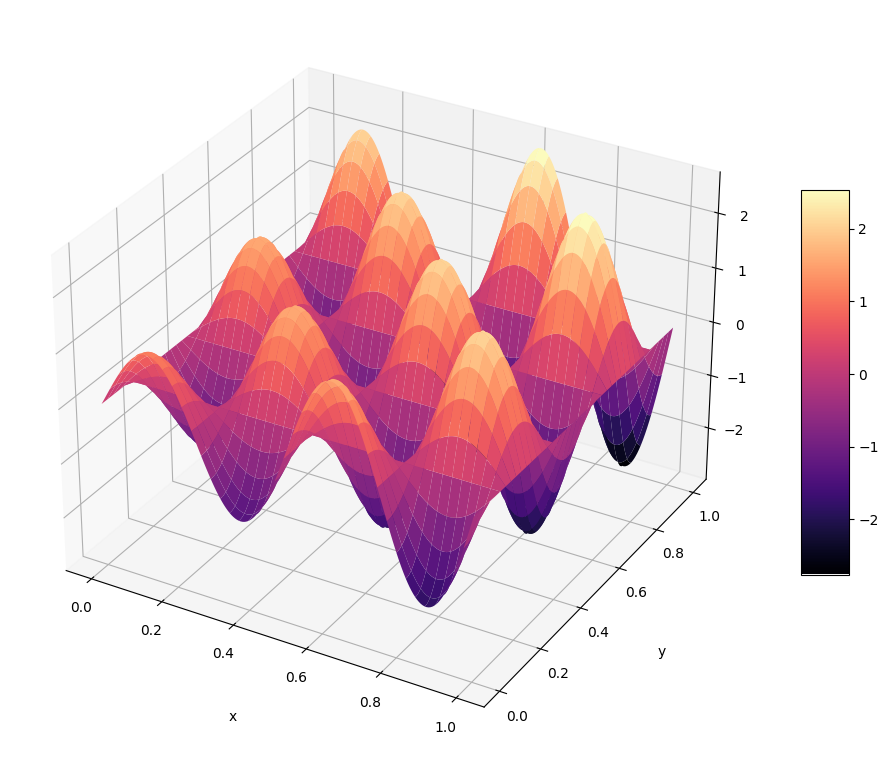}
  \caption{graph of $f$}
  \label{fig:sub1}
\end{subfigure}%
\begin{subfigure}{.5\textwidth}
  \centering
  \includegraphics[scale=0.5]{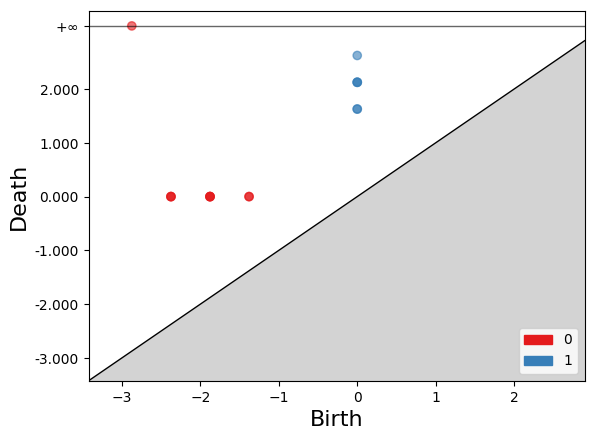}
  \caption{persistence diagram of $f$}
  \label{fig:sub2}
\end{subfigure}
\caption{Graph of $f(x)=\sin(4\pi x)\cos(4\pi y)(x+y+1)$ over $[0,1]^{2}$ and the persistence diagram associated to its sublevel sets filtration, red points corresponds to the $H_{0}-$persistence diagram and blue points to the $H_{1}-$persistence diagram.}
\label{fig:test}
\end{figure}

\noindent To compare persistence diagrams, a popular distance, especially in statistical contexts, is the bottleneck distance.
\begin{defi}
\label{def: bottleneck}
The \textbf{bottleneck distance} between two persistence diagrams $D_{1}$ and $D_{2}$ is,
$$d_{b}\left(D_{1},D_{2}\right)=\underset{\gamma\in\Gamma}{\inf}\underset{p\in D_{1}}{\sup}||p-\gamma(p)||_{\infty}$$
    with $\Gamma$ the set of all bijections between $D_{1}$ and $D_{2}$ (both enriched with the diagonal).
\end{defi}
\noindent We now introduce the algebraic stability theorem for the bottleneck distance. This theorem was the key for proving upper bounds in \cite{Henneuse24a}, it also plays a crucial role, here, in proving Theorem \ref{MainProp}. This theorem relies on interleaving between modules, a notion we extensively exploit.
\begin{defi}
\label{def:interleaving}
Two persistence modules $\mathbb{V}=\left(\mathbb{V}_{\lambda}\right)_{\lambda\in \Lambda\subset \mathbb{R}}$ and $\mathbb{W}=\left(\mathbb{W}_{\lambda}\right)_{\lambda\in \Lambda\subset \mathbb{R}}$ are said to be \textbf{$\varepsilon$-interleaved} if there exist two families of applications $\phi=\left(\phi_{\lambda}\right)_{\lambda\in \Lambda\subset \mathbb{R}}$ and $\psi=\left(\psi_{\lambda}\right)_{\lambda\in \Lambda\subset \mathbb{R}}$ where $\phi_{\lambda}:\mathbb{V}_{\lambda}\rightarrow\mathbb{W}_{\lambda+\varepsilon}$, $\psi_{\lambda}:\mathbb{W}_{\lambda}\rightarrow\mathbb{V}_{\lambda+\varepsilon}$, and for all $\lambda<\lambda^{'}$ the following diagrams commute,
\begin{center}
\begin{tikzcd}
	{\mathbb{V}_{\lambda }} && {\mathbb{V}_{\lambda ^{'}}} & {\mathbb{W}_{\lambda }} && {\mathbb{W}_{\lambda ^{'}}} \\
	{\mathbb{W}_{\lambda +\varepsilon }} && {\mathbb{W}_{\lambda '+\varepsilon }} & {\mathbb{V}_{\lambda +\varepsilon }} && {\mathbb{V}_{\lambda '+\varepsilon }} \\
	{\mathbb{V}_{\lambda }} && {\mathbb{V}_{\lambda+2\varepsilon }} & {\mathbb{W}_{\lambda}} && {\mathbb{W}_{\lambda+2\varepsilon }} \\
	& {\mathbb{W}_{\lambda +\varepsilon }} &&& {\mathbb{V}_{\lambda +\varepsilon }}
	\arrow["{\phi _{\lambda }}"', from=1-1, to=2-1]
	\arrow["{\phi _{\lambda ^{'}}}", from=1-3, to=2-3]
	\arrow["{w_{\lambda +\varepsilon }^{\lambda ^{'} +\varepsilon }}"', from=2-1, to=2-3]
	\arrow["{w_{\lambda }^{\lambda ^{'}}}", from=1-4, to=1-6]
	\arrow["{\psi_{\lambda }}"', from=1-4, to=2-4]
	\arrow["{v_{\lambda +\varepsilon }^{\lambda ^{'} +\varepsilon }}"', from=2-4, to=2-6]
	\arrow["{\psi_{\lambda ^{'}}}", from=1-6, to=2-6]
	\arrow["{v_{\lambda }^{\lambda ^{'}+2\varepsilon}}", from=3-1, to=3-3]
	\arrow["{\phi _{\lambda }}"', from=3-1, to=4-2]
	\arrow["{\psi_{\lambda+\varepsilon}}"', from=4-2, to=3-3]
	\arrow["{\psi_{\lambda }}"', from=3-4, to=4-5]
	\arrow["{v_{\lambda }^{\lambda ^{'}}}", from=1-1, to=1-3]
	\arrow["{w_{\lambda }^{\lambda ^{'}+2\varepsilon}}", from=3-4, to=3-6]
	\arrow["{\phi _{\lambda+\varepsilon}}"', from=4-5, to=3-6]
\end{tikzcd}
\end{center}
\end{defi}
\begin{customthm}{ (\citealt[]["algebraic stability"]{Chazal2009})}
Let $\mathbb{V}$ and $\mathbb{W}$ two $q-$tame persistence modules. If  $\mathbb{V}$ and $\mathbb{W}$  are $\varepsilon-$interleaved then,
$$d_{b}\left(\operatorname{dgm}(\mathbb{V}),\operatorname{dgm}(\mathbb{W})\right)\leq \varepsilon.$$
\end{customthm}
\noindent We now give a corollary of this result, proved earlier in special cases \citep{Baranikov94,CSEH2005}. We insist on the fact that this is a strictly weaker result than algebraic stability.
\begin{customthm}{("sup norm stability")}
    Let $f$ and $g$ two real-valued $q$-tame function, for all $s\in\mathbb{N}$
$$d_{b}\left(\operatorname{dgm}\left(\mathbb{V}_{f,s}\right),\operatorname{dgm}\left(\mathbb{V}_{g,s}\right)\right)\leq ||f-g||_{\infty}.$$
\end{customthm}
\begin{figure}[h]
\centering
\begin{subfigure}{0.5\textwidth}
  \centering
  \includegraphics[scale=0.45]{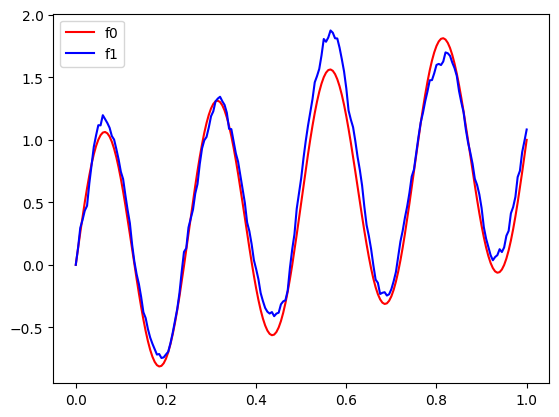}
  \caption{graphs of $f_0$ and $f_1$}
  \label{fig:sub1}
\end{subfigure}%
\begin{subfigure}{0.5\textwidth}
  \centering
  \includegraphics[scale=0.5]{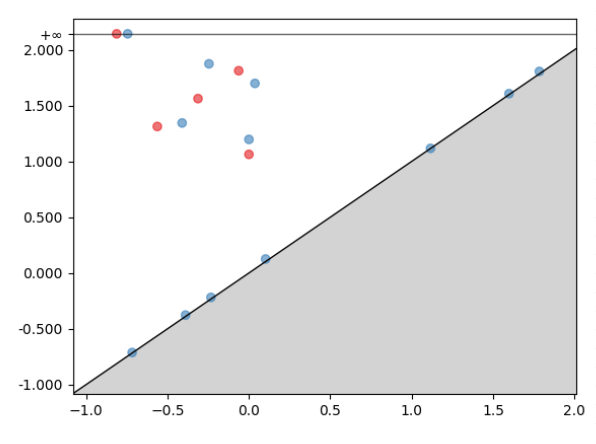}
  \caption{persistence diagrams of $f_0$ and $f_1$}
  \label{fig:sub2}
\end{subfigure}
\caption{1D Illustration of stability theorems.}
\label{fig:test}
\end{figure}
\noindent This last property is often used to upper bound the errors (in bottleneck distance) of "plug-in" estimators of persistence diagrams. It enables the direct translation of convergence rates in sup-norm  to convergence rates in bottleneck distance, which for regular classes of signals (e.g. Hölder spaces) provides minimax upper bounds. Still, for wider classes, this approach falls short. The alternative approach proposed in this work focuses instead on the estimation of the persistent modules $\mathbb{V}_{f,s}$, in the sense of interleaving, and then exploits algebraic stability.
\subsection{Generalized gradient}
\label{Background section 1}
We here present some concepts from geometric measure theory, extensively used in geometric inference and TDA. The first notion used in this paper is the distance function to a compact set.
\begin{defi}
Let $K\subset[0,1]^{d}$ a compact set, the \textbf{distance function} $d_{K}$ is given by,
$$d_{K}:x\mapsto \min\limits_{y\in K}||x-y||_{2}.$$
\end{defi}
\noindent Generally, the distance function is not differentiable everywhere, still it is possible to define a generalized gradient function that matches the gradient at points where the distance function is differentiable.
\begin{defi}
Let,
$$\Gamma_K(x)=\{y \in K \mid ||x-y||_{2}=d_{K}(x)\}$$
the set of closest points to $x$ in $K$. For $x\in[0,1]^{d}\setminus K$, let denote $\Theta_{K}(x)$ the center of the unique smallest ball enclosing $\Gamma_K(x)$, the \textbf{generalized gradient function} $\nabla_{K}(x)$ is defined as,
$$\nabla_{K}(x)=\frac{x-\Theta_{K}(x)}{d_{k}(x)}.$$
\end{defi}
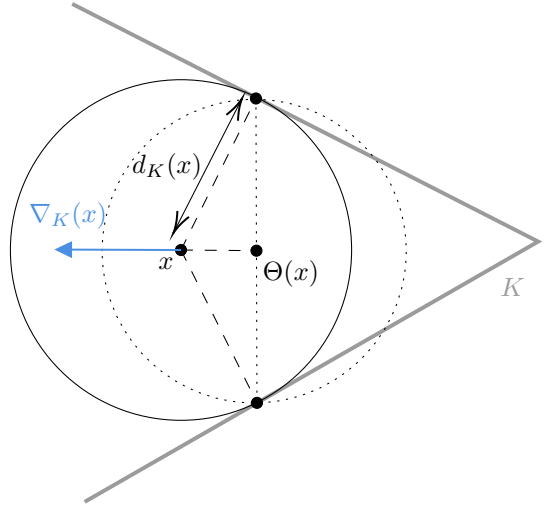
\begin{figure}[h]
    \centering
    \begin{tikzpicture}[x=0.75pt,y=0.75pt,yscale=-1,xscale=1]
%uncomment if require: \path (0,300); %set diagram left start at 0, and has height of 300

%Straight Lines [id:da8537445711353691] 
\draw [color={rgb, 255:red, 155; green, 155; blue, 155 }  ,draw opacity=1 ][line width=1.5]    (137.98,265.87) -- (366.67,134.5) -- (131.67,14.53) ;
%Shape: Ellipse [id:dp01416969206433949] 
\draw   (100.5,138.76) .. controls (100.5,91.28) and (139.02,52.8) .. (186.54,52.8) .. controls (234.06,52.8) and (272.59,91.28) .. (272.59,138.76) .. controls (272.59,186.23) and (234.06,224.71) .. (186.54,224.71) .. controls (139.02,224.71) and (100.5,186.23) .. (100.5,138.76) -- cycle ;
%Shape: Ellipse [id:dp35373233592737874] 
\draw  [fill={rgb, 255:red, 0; green, 0; blue, 0 }  ,fill opacity=1 ] (183.76,138.76) .. controls (183.76,137.22) and (185,135.97) .. (186.54,135.97) .. controls (188.08,135.97) and (189.33,137.22) .. (189.33,138.76) .. controls (189.33,140.29) and (188.08,141.54) .. (186.54,141.54) .. controls (185,141.54) and (183.76,140.29) .. (183.76,138.76) -- cycle ;
%Shape: Ellipse [id:dp3228471392103325] 
\draw  [fill={rgb, 255:red, 0; green, 0; blue, 0 }  ,fill opacity=1 ] (221.55,62.29) .. controls (221.55,60.75) and (222.8,59.5) .. (224.34,59.5) .. controls (225.88,59.5) and (227.13,60.75) .. (227.13,62.29) .. controls (227.13,63.82) and (225.88,65.07) .. (224.34,65.07) .. controls (222.8,65.07) and (221.55,63.82) .. (221.55,62.29) -- cycle ;
%Shape: Ellipse [id:dp6647047110023592] 
\draw  [fill={rgb, 255:red, 0; green, 0; blue, 0 }  ,fill opacity=1 ] (222.07,215.9) .. controls (222.07,214.36) and (223.32,213.12) .. (224.86,213.12) .. controls (226.4,213.12) and (227.64,214.36) .. (227.64,215.9) .. controls (227.64,217.44) and (226.4,218.68) .. (224.86,218.68) .. controls (223.32,218.68) and (222.07,217.44) .. (222.07,215.9) -- cycle ;
%Straight Lines [id:da5258463166844678] 
\draw  [dash pattern={on 4.5pt off 4.5pt}]  (186.54,138.76) -- (224.86,215.9) ;
%Straight Lines [id:da0563877878623531] 
\draw  [dash pattern={on 4.5pt off 4.5pt}]  (186.54,138.76) -- (224.34,62.29) ;
%Shape: Ellipse [id:dp22357123260140233] 
\draw  [dash pattern={on 0.84pt off 2.51pt}] (146.79,139.31) .. controls (146.79,97.01) and (181.12,62.72) .. (223.46,62.72) .. controls (265.81,62.72) and (300.13,97.01) .. (300.13,139.31) .. controls (300.13,181.61) and (265.81,215.9) .. (223.46,215.9) .. controls (181.12,215.9) and (146.79,181.61) .. (146.79,139.31) -- cycle ;
%Straight Lines [id:da19025543016662305] 
\draw  [dash pattern={on 0.84pt off 2.51pt}]  (224.34,62.29) -- (224.86,215.9) ;
%Shape: Ellipse [id:dp23560237641714377] 
\draw  [fill={rgb, 255:red, 0; green, 0; blue, 0 }  ,fill opacity=1 ] (221.81,139.09) .. controls (221.81,137.56) and (223.06,136.31) .. (224.6,136.31) .. controls (226.14,136.31) and (227.39,137.56) .. (227.39,139.09) .. controls (227.39,140.63) and (226.14,141.88) .. (224.6,141.88) .. controls (223.06,141.88) and (221.81,140.63) .. (221.81,139.09) -- cycle ;
%Straight Lines [id:da2383532852607555] 
\draw [color={rgb, 255:red, 74; green, 144; blue, 226 }  ,draw opacity=1 ][line width=0.75]    (186.54,138.76) -- (125.67,138.54) ;
\draw [shift={(122.67,138.53)}, rotate = 0.2] [fill={rgb, 255:red, 74; green, 144; blue, 226 }  ,fill opacity=1 ][line width=0.08]  [draw opacity=0] (8.93,-4.29) -- (0,0) -- (8.93,4.29) -- cycle    ;
%Straight Lines [id:da6507541014515408] 
\draw  [dash pattern={on 4.5pt off 4.5pt}]  (186.54,138.76) -- (224.6,139.09) ;
%Straight Lines [id:da4608935250974129] 
\draw    (216.42,64.05) -- (183.58,128.48) ;
\draw [shift={(182.67,130.27)}, rotate = 297.01] [color={rgb, 255:red, 0; green, 0; blue, 0 }  ][line width=0.75]    (10.93,-3.29) .. controls (6.95,-1.4) and (3.31,-0.3) .. (0,0) .. controls (3.31,0.3) and (6.95,1.4) .. (10.93,3.29)   ;
\draw [shift={(217.33,62.27)}, rotate = 117.01] [color={rgb, 255:red, 0; green, 0; blue, 0 }  ][line width=0.75]    (10.93,-3.29) .. controls (6.95,-1.4) and (3.31,-0.3) .. (0,0) .. controls (3.31,0.3) and (6.95,1.4) .. (10.93,3.29)   ;

% Text Node
\draw (346.41,151.58) node [anchor=north west][inner sep=0.75pt]  [font=\small,color={rgb, 255:red, 155; green, 155; blue, 155 }  ,opacity=1 ]  {$K$};
% Text Node
\draw (173.88,141.49) node [anchor=north west][inner sep=0.75pt]  [font=\small]  {$x$};
% Text Node
\draw (226.6,142.49) node [anchor=north west][inner sep=0.75pt]  [font=\small]  {$\Theta ( x)$};
% Text Node
\draw (108.67,113.47) node [anchor=north west][inner sep=0.75pt]  [font=\small,color={rgb, 255:red, 74; green, 144; blue, 226 }  ,opacity=1 ]  {$\nabla _{K}( x)$};
% Text Node
\draw (160.67,88.4) node [anchor=north west][inner sep=0.75pt]  [font=\small]  {$d_{K}( x)$};
\end{tikzpicture}
    \caption{2D example with 2 closest points}
\end{figure}
\noindent With these definitions we can now introduce the notion of $\mu-$reach \citep{mureach}.
\begin{defi}
Let $K \subset [0,1]^{d}$ a compact set, its \textbf{$\mu$-reach} $\operatorname{reach}_\mu(K)$ is defined by,
$$
\operatorname{reach}_\mu(K)=\inf \left\{r \mid \inf _{d_K^{-1}(r)\setminus K}\left\|\nabla_K\right\|_{2}<\mu\right\}.
$$
\end{defi}
\noindent The $1-$reach corresponds simply to the reach, the curvature measure introduced by \cite{Fed59}, involved in our earlier work \cite{Henneuse24a}. The $\mu-$reach is positive for a large class of sets, indeed the union for all $\mu>0$ of compact sets with positive $\mu$-reach is dense in the space of compact sets (for the Hausdorff distance). In particular, for our purpose, Assumption \textbf{A3} allows considering discontinuity sets displaying corners and multiple points (i.e. self-intersections), cases that were excluded in \cite{Henneuse24a}.\\\\
We now state a particular property of sets with positive $\mu-$reach from \cite{SSDO07}.  This property is used in the proof of Theorem \ref{MainProp} to quantify approximation errors. For a set $K\subset[0,1]^{d}$ and $r\geq 0$ we denote $B_{2}(K,r)=\{x\in[0,1]^{d}\text{ s.t. }d_{K}(x)<r\}$ and $\overline{B}_{2}(K,r)=\{x\in[0,1]^{d}\text{ s.t. }d_{K}(x)\leq r\}$.
\begin{lmm}{\cite[Lemma 3.1.]{SSDO07}}
\label{lmm SSDO7}
Let $K\subset [0,1]^{d}$ a compact set and let $\mu>0, r>0$ be such that $r< \operatorname{reach}_\mu(K)$. For any $x \in \overline{B}_{2}(K,r) \backslash K$, one has
$$
d_{ \partial \overline{B}_{2}(K,r)}\left(x\right) \leq \frac{r-d_K(x)}{\mu} \leq \frac{r}{\mu}.
$$
\end{lmm}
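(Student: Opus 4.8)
The second inequality is immediate, since $d_K(x)\geq 0$, so the content is the bound $d_{\partial \overline{B}_2(K,r)}(x)\leq (r-d_K(x))/\mu$. The plan is to exhibit an explicit path from $x$ to $\partial \overline{B}_2(K,r)$ whose length is at most $(r-d_K(x))/\mu$: since the distance from $x$ to the boundary is bounded by the length of any path reaching it, this suffices.

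The natural path is a trajectory of the generalized gradient flow of $d_K$. I would invoke the flow $t\mapsto \gamma(t)$ of the (generically discontinuous) field $\nabla_K$ issuing from $x$, as constructed by Lieutier. Parametrizing it by arc length, the property I rely on is that $t\mapsto d_K(\gamma(t))$ is absolutely continuous and nondecreasing with $\frac{d}{dt}d_K(\gamma(t))=\|\nabla_K(\gamma(t))\|$ for a.e.\ $t$ (the rate of increase of $d_K$ in the steepest-ascent direction equals the norm of the generalized gradient). I then flow until the first time $T$ at which $\gamma$ meets $\partial \overline{B}_2(K,r)$; this happens either when $d_K(\gamma(T))=r$ or when $\gamma$ exits $[0,1]^d$ through $\partial[0,1]^d$, and in both cases $\gamma(T)\in\partial \overline{B}_2(K,r)$.

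The quantitative input is the control of the generalized gradient below the $\mu$-reach: each point $\gamma(t)$, $t\in[0,T]$, lies in a level set $d_K^{-1}(s')$ with $0<s'=d_K(\gamma(t))\leq r<\operatorname{reach}_\mu(K)$, so by the very definition of the $\mu$-reach, $\|\nabla_K(\gamma(t))\|\geq\mu$. Integrating the flow identity and using $d_K(\gamma(T))\leq r$,
$$
r-d_K(x)\geq d_K(\gamma(T))-d_K(x)=\int_0^T\|\nabla_K(\gamma(t))\|\,dt\geq \mu T,
$$
so the arc length $T$ of the trajectory satisfies $T\leq (r-d_K(x))/\mu$. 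As $\gamma$ is unit-speed, the Euclidean distance from $x$ to $\gamma(T)\in\partial \overline{B}_2(K,r)$ is at most $T$, whence $d_{\partial \overline{B}_2(K,r)}(x)\leq T\leq (r-d_K(x))/\mu$, as claimed.

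The main obstacle is the flow itself: $\nabla_K$ is discontinuous across the medial axis, so the existence of a continuous, distance-increasing flow and the validity of the identity $\frac{d}{dt}d_K(\gamma(t))=\|\nabla_K(\gamma(t))\|$ are not elementary and rest on Lieutier's analysis of the generalized gradient. The only other point requiring care is confirming that the trajectory actually reaches $\partial \overline{B}_2(K,r)$ within the claimed length, including the case where it leaves through $\partial[0,1]^d$; there the distance increment $d_K(\gamma(T))-d_K(x)$ is only smaller, so the same bound holds a fortiori.
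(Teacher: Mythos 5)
The paper itself does not prove this lemma: it is imported verbatim as Lemma 3.1 of \cite{SSDO07}, so there is no internal proof to compare against. Your argument is correct, and it is in essence the original proof from that literature: follow Lieutier's generalized-gradient flow from $x$, use that $\|\nabla_K\|_2\geq\mu$ at every point whose distance to $K$ lies below $\operatorname{reach}_\mu(K)$, and convert the growth of $d_K$ along the trajectory into an arc-length (hence Euclidean distance) bound. Two assertions you make deserve explicit justification in a full write-up. First, the identity $\frac{d}{dt}d_K(\gamma(t))=\|\nabla_K(\gamma(t))\|$ is the arc-length reparametrization of Lieutier's formulas (in the time parametrization the rate of increase of $d_K$ is $\|\nabla_K\|^2$ while the speed is $\|\nabla_K\|$); the reparametrization is legitimate precisely because $\|\nabla_K\|\geq\mu>0$ along the relevant portion of the trajectory, so this should be said. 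Second, the claim that a hitting point $y$ interior to $[0,1]^d$ with $d_K(y)=r$ actually lies on $\partial\overline{B}_2(K,r)$ is not automatic: for a general compact set, points of the level set $\{d_K=r\}$ can be interior to the offset (take $K$ a round circle in the plane and $y$ its center). It is true here because $r<\operatorname{reach}_\mu(K)$ forces $\|\nabla_K(y)\|\geq\mu>0$, and the directional derivative of $d_K$ at $y$ in the direction $\nabla_K(y)/\|\nabla_K(y)\|$ equals $\|\nabla_K(y)\|$, so arbitrarily close to $y$ there are points with $d_K>r$; this is the same mechanism as in your flow argument, but it needs to be invoked. Finally, your treatment of trajectories exiting through $\partial[0,1]^d$ is the right one given the paper's convention that $\overline{B}_2(K,r)\subset[0,1]^d$ with boundary taken in the ambient sense; under a boundary-relative-to-$[0,1]^d$ reading the stated inequality can actually fail, so the case split you make is not a pedantic one.
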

\section{Procedure description}
\label{Procedure description sec}
We here propose a two-steps estimation procedure.\\
The first step is a "rough" estimation of sublevel sets via local averaging on a regular grid plus a thickening. We call these estimators rough, as they are not meant to perform well for the estimation of sublevel sets evaluated with standard metrics. Typically, over $S_{d}(\mu,R_{\mu})$, for a fixed $\lambda\in\mathbb{R}$, our estimator $\widehat{\mathcal{F}}_{\lambda}$ can be (arbitrarily) far in Hausdorff distance from $\mathcal{F}_{\lambda}$. But, it captures well the persistent topological features of the true signal.\\
Still, cubical approximation and noise may create cycles lying around the boundaries of sublevel sets estimators that do not correspond to any cycles of the true signal (as in Figure \ref{fig:reach }.b). These cycles are pretty benign when appearing in regular regions (they will have "short" lifetime), but may have arbitrarily long lifetime when appearing around the discontinuities of the signal. Hence, for the second step, in the same spirit that \cite{Bobrowski}, instead of considering directly the modules coming from this estimated filtration, we construct an image persistence module. Again, this can be seen as a topological regularization step : it aims to eliminate such cycles (or at least make their lifetime short) without damaging (too much) the information inferred about true cycles.\\
Note that the use of image modules to filter out topological noise is not a new idea, early works by \cite{CSEHM2009} and \cite{Chazal11} already use image module to estimate persistence diagrams of signals while working on noisy domains.\\\\
In the following, for a set $A\subset \mathbb{R}^{d}$ and $b\geq 0$, we denote,
$$A^{b}=\left\{x\in \mathbb{R}^{d} \text{ s.t. } d_{\infty}\left(x,A\right)\leq b\right\}$$ 
with 
$$d_{\infty}\left(x,A\right)=\inf\limits_{y\in A}||x-y||_{\infty}.$$
\begin{figure}[H]
    \centering
    \subfloat[\centering A true cycle not captured by cubical approximation]{{\includegraphics[width=5.5cm]{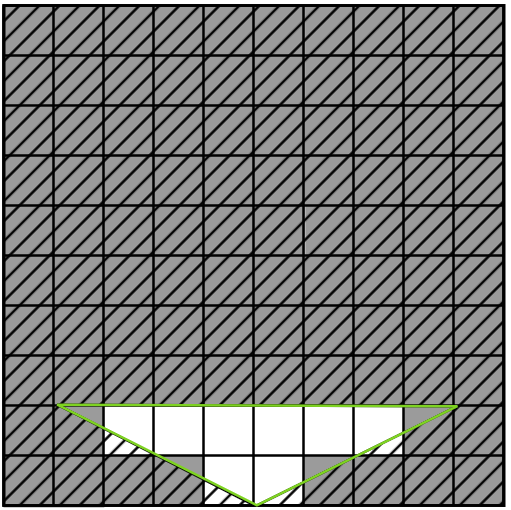} }}%
    \qquad
    \subfloat[\centering A false cycle created by cubical approximation]{{\includegraphics[width=5.5cm]{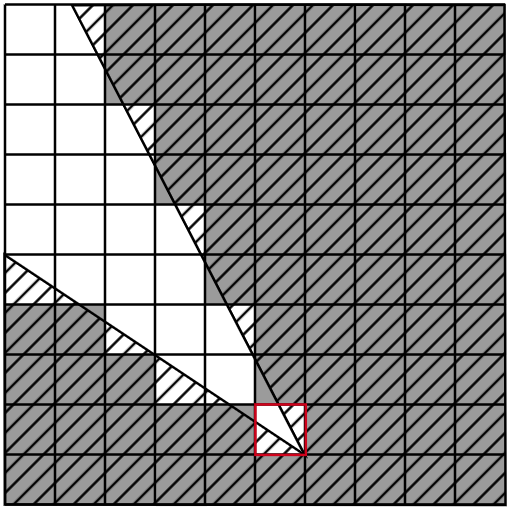} }}%
    \caption{$\lambda-$sublevel cubical approximation for $f$ the function defined as $0$ on the hatched area and $K$ outside (for arbitrarily large $K$). (a) display a case where the histogram approximation fails to capture true cycle in green, for, at least, all $0<\lambda<K/2$. (b) displays a case where the histogram approximation creates a cycle in red, not corresponding to any true cycle, with an arbitrarily long lifetime.  }%
    \label{fig:reach }%
\end{figure}
\noindent \textbf{Step 1 : rough sublevel sets estimation.} Let $h>0$ such that $1/h$ is an integer, consider $G_{h}$ the regular orthogonal grid over $[0,1]^{d}$ of step $h$ and $C_{h}$ the collection of all the hypercubes of side $h$ composing $G_{h}$. Let $r_{1}>0$, we define, for all $\lambda\in\mathbb{R}$, the "rough" $\lambda-$sublevel estimator,
$$\widehat{\mathcal{F}}_{\lambda}=\left(\bigcup\limits_{H\in C_{h,\lambda}}H\right)^{\lceil r_{1}\rceil h} \text{ with }C_{h,\lambda}=\left\{H\in C_{h}\text{ such that }\int_{H}dX-\int_{H}\lambda\leq 0\right\}$$
and $\lceil.\rceil$ the ceiling function. This thickening of the sublevel sets obtained by histogram estimation of $f$ aims to solve the problem exposed in Figure \ref{fig:reach }.a. \\\\
\textbf{Step 2 : construction of the image persistence module and associated diagram.} Let, $r_{2}>0$ and,
$$\rho_{\lambda}:H_{s}\left(\widehat{\mathcal{F}}_{\lambda}\right)\rightarrow H_{s}\left(\widehat{\mathcal{F}}_{\lambda}^{\lceil r_{2}\rceil h}\right)$$
the map induced by the inclusion $\widehat{\mathcal{F}}_{\lambda}\subset \widehat{\mathcal{F}}_{\lambda}^{\lceil r_{2}\rceil h}$. Then denote, $$\hat{v}_{\lambda}^{\lambda^{'}}:H_{s}\left(\widehat{\mathcal{F}}_{\lambda}^{\lceil r_{2}\rceil h}\right)\rightarrow H_{s}\left(\widehat{\mathcal{F}}_{\lambda^{'}}^{\lceil r_{2}\rceil h}\right)$$
the map induced by the inclusion.
$\widehat{\mathcal{F}}_{\lambda}^{\lceil r_{2}\rceil h}\subset \widehat{\mathcal{F}}_{\lambda^{'}}^{\lceil r_{2}\rceil h}$. Note that the following diagram commutes (all maps are induced by inclusions),
\begin{equation*}
\label{diagBase}
\begin{tikzcd}
	{H_{s}\left(\widehat{\mathcal{F}}_{\lambda}\right)} &&& {H_{s}\left(\widehat{\mathcal{F}}_{\lambda^{'}}\right)} \\
	\\
	{H_{s}\left(\widehat{\mathcal{F}}_{\lambda}^{\lceil r_{2}\rceil h}\right)} &&& {H_{s}\left(\widehat{\mathcal{F}}_{\lambda^{'}}^{\lceil r_{2}\rceil h}\right)}
	\arrow[from=1-1, to=1-4,]
	\arrow[from=1-1, to=3-1,"\rho_{\lambda}"]
	\arrow[from=1-4, to=3-4,"\rho_{\lambda^{'}}"]
	\arrow[from=3-1, to=3-4,"\hat{v}_{\lambda}^{\lambda^{'}}"]
\end{tikzcd}
\end{equation*}
Thus for all $\lambda<\lambda^{'}$, 
$$\widehat{v}_{\lambda,h}^{\lambda^{'}}\left(\operatorname{Im}(\rho_{\lambda})\right)\subset\operatorname{Im}(\rho_{\lambda^{'}}).$$
We now introduce $\widehat{\mathbb{V}}_{f,s}$ the persistence module associated to $(\operatorname{Im}(\rho_{\lambda}))_{\lambda\in \mathbb{R}}$ equipped with the collection of maps $(\widehat{v}_{\lambda,h}^{\lambda^{'}})_{\lambda<\lambda^{'}}$ for the $s$-th order homology, $\widehat{\operatorname{dgm}_{s}(f)}$ the associated persistence diagram, and $\widehat{\operatorname{dgm}(f)}$ the collection of such persistence diagrams for all $s\in\mathbb{N}$. This diagram is well-defined, as we prove in Appendix \ref{appendix tameness} that $\widehat{\mathbb{V}}_{f,s}$ is $q-$tame. This second step aims to solve the problem exposed in Figure \ref{fig:reach }.b.\\\\
\textbf{Computation. }By construction, for all $\lambda\in\mathbb{R}$, $\widehat{\mathcal{F}}_{\lambda}$ and $\widehat{\mathcal{F}}_{\lambda}^{\lceil r_{2}\rceil h}$ are simply unions of cubes from the regular grid $G_{h}$, thus can be thought as (geometric realization of) cubical complexes (or even simplicial complexes). Hence, the computation of $\widehat{\operatorname{dgm}(f)}$ is  made possible by the algorithm for image persistence gave in \cite{CSEHM2009}.\\\\
Another strategy, would be to consider,
$$\widehat{\mathcal{F}}_{\lambda}=\overline{B}_{2}\left(\bigcup\limits_{H\in C_{h,\lambda}}c_{H},r_{1}h\right)$$
with $c_{H}$ the center of the hypercube $H$ and $\hat{\mathbb{V}}_{f,s}$, the image module induced by,
$$\rho_{\lambda}:H_{s}\left(\widehat{\mathcal{F}}_{\lambda}\right)\rightarrow H_{s}\left(\overline{B}_{2}\left(\widehat{\mathcal{F}}_{\lambda},r_{2}h\right)\right).$$
One can show, adapting the proofs of Section \ref{proof TH1}, that the subsequent estimator achieve the same convergence rates (up to slight change in constant). As $\widehat{\mathcal{F}}_{\lambda}$ and  $B_{2}\left(\widehat{\mathcal{F}}_{\lambda},r_{2}h\right)$ are unions of Euclidean balls, they can be replaced by Čech complexes. To justify properly that it lead to the same estimated persistence diagram, one can adapt the "parameter nerve theorem" (Lemma 3.4) from \cite{ChazalOudot08}. Again, the computation of this persistence diagram is then made possible by the algorithm provided in \cite{CSEHM2009}.
\section{Convergence results}
\label{proof TH1}
This section is dedicated to the proof of Theorem \ref{MainProp}.
\subsection{Main proof}
The idea of proof follows similarly to the proof for upper bounds in \cite{Henneuse24a}. The general idea is to construct an interleaving between the module $\mathbb{V}_{f,s}$ and $\widehat{\mathbb{V}}_{f,s}$ and apply the algebraic stability theorem. This implies to construct two morphisms, $\overline{\psi}:\mathbb{V}_{f,s}\rightarrow\widehat{\mathbb{V}}_{f,s}$ and $\overline{\phi}:\widehat{\mathbb{V}}_{f,s}\rightarrow\mathbb{V}_{f,s}$ satisfying definition \ref{def:interleaving}. Propositions \ref{inclusion} and \ref{lemmaFiltEquiv} give the necessary ingredients.\\\\ 
Let, $h>0$ and,
$$||W||_{h}=\sup_{H\in C_{h}} \frac{|W(H)|}{h^{d}}$$
with $W(H)=\int_H d W$.
\begin{prp}
\label{inclusion}
Let $f\in S_{d}(\mu,R_{\mu})$, $h<R_{\mu}\mu/\sqrt{d}$ and $r_{1}=\sqrt{d}/\mu$. We have, for all $\lambda\in\mathbb{R}$,
$$\mathcal{F}_{\lambda-\theta||W||_{h}}\subset \widehat{\mathcal{F}}_{\lambda}\subset \mathcal{F}_{\lambda+\theta||W||_{h}}^{(\sqrt{d}+\lceil\sqrt{d}/\mu\rceil)h}.$$
\end{prp}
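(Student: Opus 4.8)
The plan is to establish the two inclusions separately, using Proposition~\ref{inclusion}'s hypotheses $r_1 = \sqrt{d}/\mu$ and $h < R_\mu\mu/\sqrt{d}$ to control the thickening. Throughout, the key observation is that for a cube $H \in C_h$, the quantity $\int_H dX = \int_H f\,dt + \theta W(H)$, so that $\int_H dX - \int_H \lambda = \int_H (f - \lambda)\,dt + \theta W(H)$; dividing by $h^d$, the membership condition $H \in C_{h,\lambda}$ reads $\fint_H f + \theta W(H)/h^d \leq \lambda$, where $\fint_H f$ is the average of $f$ over $H$. The term $\theta W(H)/h^d$ is bounded in absolute value by $\theta\|W\|_h$, which is the source of the $\pm\theta\|W\|_h$ shifts in the statement.

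For the left inclusion $\mathcal{F}_{\lambda - \theta\|W\|_h} \subset \widehat{\mathcal{F}}_\lambda$, I would take a point $x \in \mathcal{F}_{\lambda - \theta\|W\|_h}$, i.e. $f(x) \leq \lambda - \theta\|W\|_h$, and let $H$ be a cube containing $x$. The goal is to show $x \in \widehat{\mathcal{F}}_\lambda$, and the cleanest route is to argue that $x$ is close to some selected cube. The difficulty is that $\fint_H f$ need not be $\leq \lambda$ if $H$ straddles a discontinuity: even though $f(x)$ is small, the average over $H$ may be dominated by a large value on the other side of a boundary $\partial M_i$ passing through $H$. This is exactly where the $\mu$-reach assumption \textbf{A3} and the thickening by $\lceil r_1\rceil h = \lceil\sqrt{d}/\mu\rceil h$ enter: one shows that for any $x$ with $f(x) \leq \lambda - \theta\|W\|_h$, there is a nearby cube $H'$ — within $\ell_\infty$-distance $\lceil r_1\rceil h$ — lying entirely in a region where $f \leq \lambda$ (a cube not meeting the discontinuity set near $x$), so that $\fint_{H'} f \leq \lambda$ and hence $H' \in C_{h,\lambda}$, placing $x$ in the thickened union $\widehat{\mathcal{F}}_\lambda$. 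Lemma~\ref{lmm SSDO7} is the natural tool to guarantee that such a clean cube exists within the prescribed radius: near a boundary of positive $\mu$-reach, one can reach a point at controlled distance $\leq r/\mu$ from the boundary, and the factor $\sqrt{d}/\mu$ accounts for passing from $\ell_2$ to $\ell_\infty$ distances and from points to full cubes of diameter $\sqrt{d}\,h$.

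For the right inclusion $\widehat{\mathcal{F}}_\lambda \subset \mathcal{F}_{\lambda + \theta\|W\|_h}^{(\sqrt{d} + \lceil\sqrt{d}/\mu\rceil)h}$, I would start from $x \in \widehat{\mathcal{F}}_\lambda$, so by definition $x$ is within $\ell_\infty$-distance $\lceil r_1\rceil h = \lceil\sqrt{d}/\mu\rceil h$ of some $H \in C_{h,\lambda}$, meaning $\fint_H f + \theta W(H)/h^d \leq \lambda$, hence $\fint_H f \leq \lambda + \theta\|W\|_h$. Since the average of $f$ over $H$ is at most $\lambda + \theta\|W\|_h$, assumption \textbf{A2} (the lower-semicontinuity-type condition ensuring $f$ attains values close to its essential infimum on any region) guarantees there is a point $y \in H$ with $f(y) \leq \lambda + \theta\|W\|_h$, i.e. $y \in \mathcal{F}_{\lambda + \theta\|W\|_h}$. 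Then I bound $d_\infty(x, y)$ by the triangle inequality: $x$ is within $\lceil\sqrt{d}/\mu\rceil h$ of $H$, and $y$ lies in $H$ which has $\ell_\infty$-diameter $h \leq \sqrt{d}\,h$, giving total distance at most $(\sqrt{d} + \lceil\sqrt{d}/\mu\rceil)h$, so $x \in \mathcal{F}_{\lambda + \theta\|W\|_h}^{(\sqrt{d} + \lceil\sqrt{d}/\mu\rceil)h}$ as claimed.

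The main obstacle is the left inclusion, specifically producing a cube on which the \emph{average} of $f$ stays below $\lambda$ despite potential contamination from discontinuities within the cube. The right inclusion is comparatively routine once \textbf{A2} supplies a witness point. The essential geometric input — that the thickening radius $\sqrt{d}/\mu$ is large enough to escape any straddling cube to a clean one — is precisely what Lemma~\ref{lmm SSDO7} and assumption \textbf{A3} deliver, and verifying the constant $\sqrt{d}/\mu$ suffices (accounting for the cube geometry and the $\ell_2$-to-$\ell_\infty$ conversion) is the delicate bookkeeping I would need to carry out carefully.
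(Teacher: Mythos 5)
Your strategy is the same as the paper's: the per-cube noise bound $|\theta W(H)|/h^{d}\leq\theta\|W\|_{h}$ (the content of the paper's Lemma \ref{lmm1}), Lemma \ref{lmm SSDO7} together with \textbf{A3} to escape from a straddling cube to a clean one for the lower inclusion, and cube-diameter bookkeeping for the upper inclusion. Your upper inclusion is correct (run directly rather than by the paper's contrapositive, and in fact with a marginally better constant, $h$ in place of $\sqrt{d}h$ for one term). The lower inclusion, however, has a genuine gap at its central step. You claim that $H'\in C_{h,\lambda}$ follows from $\frac{1}{h^{d}}\int_{H'}f\leq\lambda$. This implication is false: membership in $C_{h,\lambda}$ is the condition $\frac{1}{h^{d}}\int_{H'}f+\theta W(H')/h^{d}\leq\lambda$, and the noise term can be as large as $+\theta\|W\|_{h}$, so a cube on which $f\leq\lambda$ need not be selected. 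What is needed is the stronger property $H'\subset\mathcal{F}_{\lambda-\theta\|W\|_{h}}$, i.e. $f\leq\lambda-\theta\|W\|_{h}$ everywhere on $H'$; this is exactly how the paper's Lemma \ref{lmm1} is phrased, and it is why the lower inclusion starts at the shifted level $\lambda-\theta\|W\|_{h}$: the noise allowance is spent there once and is not available again at selection time.

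Relatedly — and this is the part your proposal defers to ``delicate bookkeeping'' — you never justify that the clean cube $H'$ actually carries small values of $f$. Being disjoint from the discontinuity set only means $H'$ lies in the closure of a single piece $\overline{M_{j}}$; if that piece is on the wrong side of the discontinuity, $f$ may be arbitrarily large on it (this is precisely the situation of Figure \ref{fig:reach }). The paper closes this hole in two moves: first a reduction, using \textbf{A2}, to the case $x\in M_{i}$, or $x\in\partial M_{i}$ with $\liminf_{z\in M_{i}\rightarrow x}f(z)\leq f(x)$; then an application of Lemma \ref{lmm SSDO7} producing an escape point $y\in\left(B_{2}\left(\bigcup_{j}\partial M_{j},\sqrt{d}h\right)\right)^{c}\cap M_{i}$ lying \emph{in the same piece as} $x$, with $\|x-y\|_{2}\leq\sqrt{d}h/\mu$. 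Only then do \textbf{A1} and \textbf{A2} give $f\equiv\lambda_{i}\leq f(x)\leq\lambda-\theta\|W\|_{h}$ on the cube $H_{y,h}$ containing $y$, i.e. $H_{y,h}\subset\mathcal{F}_{\lambda-\theta\|W\|_{h}}$, whence $H_{y,h}\in C_{h,\lambda}$ by Lemma \ref{lmm1} and $x\in\widehat{\mathcal{F}}_{\lambda}$. Both gaps are fixable with ingredients you already cite, but as written the lower inclusion — which is where the real content of the proposition lies — does not go through.
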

\noindent The proof of Proposition \ref{inclusion} can be found in Section \ref{proof inclusion}. The lower inclusion given by this proposition induces directly a morphism from $\mathbb{V}_{f,s}$ to $\widehat{\mathbb{V}}_{f,s}$, this will be our $\overline{\psi}$. But the upper inclusion is not sufficient to induce a converse morphism, the construction of $\overline{\phi}$ requires additional work. To be able to use this upper inclusion, we need a map from the homology groups of $(\mathcal{F}_{\lambda}^{h})_{\lambda\in\mathbb{R}}$ (for all sufficiently small $h>0$) into the homology groups of $(\mathcal{F}_{\lambda})_{\lambda\in\mathbb{R}}$. Proposition \ref{lemmaFiltEquiv} provides such a map, exploiting the geometrical properties of the discontinuities set imposed by Assumption \textbf{A3} to construct a deformation retraction from $\overline{B}_{2}(\mathcal{F}_{\lambda},h)$ to $\mathcal{F}_{\lambda}$.
\begin{defi}
A subspace $A$ of $X$ is called a \textbf{deformation retract} of $X$ if there is a continuous $F: X \times [0,1] \rightarrow X$ (called a homotopy) such that for all $x \in X$ and $a \in A$,
\begin{itemize}
    \item $F(x, 0)=x$ 
    \item $F(x, 1) \in A$
    \item $F(a, 1)=a$.
\end{itemize}
The function $F$ is called a \textbf{deformation retraction} from $X$ to $A$.
\end{defi}
\noindent Homotopy, and thus homology, is invariant under deformation retract. Thus, a deformation retraction from $X$ to $A$ induces isomorphism between homology groups. More precisely, for all $t\in[0,1]$, $F(.,t)$ induces a morphism $F(.,t)^{\#}:C_{s}(X)\rightarrow C_{S}(X)$ between $s-$chains of $X$ defined by composing each singular $s$-simplex $\sigma: \Delta^s \rightarrow X$ with $F(.,t)$ to get a singular $s$-simplex $F^{\#}(\sigma,t)=F(.,t)\circ\sigma: \Delta^s \rightarrow X$, then extending $F^{\#}(.,t)$ linearly via $F^{\#}\left(\sum_i n_i \sigma_i,t\right)=\sum_i n_i F^{\#}\left(\sigma_i,t\right)=$ $\sum_i n_i F(.,t)\circ \sigma_i$. The morphism $F^{*}(.,t):H_{s}(X)\rightarrow H_{s}(X)$ defined by $[C]\mapsto [F^{\#}(C,t)]$ can be shown to be an isomorphism for all $t\in[0,1]$ \citep[see][pages 110-113]{Hatcher}. In particular, $F^{*}(.,1):H_{s}(X)\rightarrow H_{s}(A)$ is an isomorphism.
\begin{prp}
\label{lemmaFiltEquiv}
    For all $0<h<R_{\mu}$ and all $\lambda\in\mathbb{R}$ there exists a map $F_{\lambda,h}:\overline{B}_{2}(\mathcal{F}_{\lambda},h)\times [0,1]\rightarrow \overline{B}_{2}(\mathcal{F}_{\lambda},h)$ such that, $F_{\lambda,h}$ is a deformation retraction of $\overline{B}_{2}(\mathcal{F}_{\lambda},h)$ onto $\mathcal{F}_{\lambda}$ and for all $x\in \overline{B}_{2}(\mathcal{F}_{\lambda},h)$ and all $t\in[0,1]$,
    $$F_{\lambda,h}(x,t)\in \overline{B}_{2}\left(x,2d_{\mathcal{F}_{\lambda}}(x)/\mu^{2}\right).$$
    
\end{prp}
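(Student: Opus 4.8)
The goal is to construct, for each $\lambda$ and each small $h$, an explicit deformation retraction of the $h$-neighborhood $\overline{B}_{2}(\mathcal{F}_{\lambda},h)$ onto $\mathcal{F}_{\lambda}$ itself, with quantitative control on how far points travel. The natural strategy is to flow points along the (negative) generalized gradient of the distance function $d_{\mathcal{F}_{\lambda}}$, since the generalized gradient is precisely the tool that exists even when $d_{\mathcal{F}_{\lambda}}$ fails to be differentiable. First I would set $K=\partial\mathcal{F}_{\lambda}$ (or more carefully the relevant portion of the discontinuity set $\bigcup_{i}\partial M_{i}$ bounding the sublevel set) and invoke Assumption \textbf{A3}, which guarantees $\operatorname{reach}_{\mu}\big(\bigcup_{i\in I}\partial M_{i}\big)\geq R_{\mu}$. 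The constraint $h<R_{\mu}$ then places the entire tube $\overline{B}_{2}(\mathcal{F}_{\lambda},h)$ inside the region where the $\mu$-reach condition gives a lower bound $\|\nabla_{\mathcal{F}_{\lambda}}\|_{2}\geq\mu$ on the generalized gradient away from $\mathcal{F}_{\lambda}$.

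The construction I have in mind is to integrate the generalized gradient flow. For $x\notin\mathcal{F}_{\lambda}$, one follows the trajectory of the flow associated to $\nabla_{\mathcal{F}_{\lambda}}$ that decreases the distance $d_{\mathcal{F}_{\lambda}}$; such flows are well studied in the critical point theory for distance functions (Grove, and Lieutier's work on the medial axis / reconstruction), where a continuous flow with Lipschitz-in-time trajectories exists even though $\nabla_{\mathcal{F}_{\lambda}}$ is only a measurable vector field. The key quantitative point is that along this flow the distance decreases at a rate at least $\|\nabla_{\mathcal{F}_{\lambda}}\|_{2}\geq\mu$, so a point starting at distance $d_{\mathcal{F}_{\lambda}}(x)\leq h<R_{\mu}$ reaches $\mathcal{F}_{\lambda}$ after flowing an arc-length of at most $d_{\mathcal{F}_{\lambda}}(x)/\mu$; reparametrizing time to $[0,1]$ and defining $F_{\lambda,h}(x,t)$ as the flow run for a fraction $t$ of the way (and $F_{\lambda,h}(x,t)=x$ for $x\in\mathcal{F}_{\lambda}$) produces the three deformation-retraction properties. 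The displacement bound then follows because the total arc-length traveled is $\leq d_{\mathcal{F}_{\lambda}}(x)/\mu$, and the instantaneous speed in the reparametrized time picks up another factor controlled by $\mu$ (the flow speed is $1/\|\nabla\|_{2}\leq 1/\mu$ in the natural parametrization), giving the total Euclidean displacement $\|F_{\lambda,h}(x,t)-x\|_{2}\leq 2 d_{\mathcal{F}_{\lambda}}(x)/\mu^{2}$, i.e. the stated membership $F_{\lambda,h}(x,t)\in\overline{B}_{2}\big(x,2d_{\mathcal{F}_{\lambda}}(x)/\mu^{2}\big)$. I would also use Lemma \ref{lmm SSDO7} to control how the flow interacts with the boundary $\partial\overline{B}_{2}(\mathcal{F}_{\lambda},h)$ and to keep trajectories inside the tube.

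The main obstacle, and the part requiring genuine care, is establishing the \emph{continuity} of $F_{\lambda,h}$ jointly in $(x,t)$. The generalized gradient $\nabla_{\mathcal{F}_{\lambda}}$ is discontinuous (it jumps across the medial axis, where the set $\Gamma_{\mathcal{F}_{\lambda}}(x)$ of nearest points changes cardinality), so one cannot simply appeal to Picard–Lindelöf; instead the flow must be defined as the unique maximal trajectory that minimizes $d_{\mathcal{F}_{\lambda}}$ as quickly as possible, and its continuous dependence on the initial point $x$ must be argued via the semiconcavity of $d_{\mathcal{F}_{\lambda}}$ and uniqueness of the flow lines (as in Lieutier's work on homotopy types of open sets and the Cheeger–Gromov style flow). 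A secondary subtlety is the behavior precisely at the discontinuity set of $f$, where corners and multiple points are now permitted by relaxing the full reach to the $\mu$-reach: here the gradient lower bound $\mu$ is exactly what prevents the flow from stalling at such singular points, so I would verify that the $\mu$-reach hypothesis of \textbf{A3} suffices to keep $\inf\|\nabla_{\mathcal{F}_{\lambda}}\|_{2}\geq\mu$ throughout the punctured tube. Once continuity and the gradient lower bound are in hand, the retraction properties and the displacement estimate are immediate, and the proof concludes.
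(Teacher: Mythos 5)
Your high-level plan (retract the tube onto $\mathcal{F}_{\lambda}$ by flowing along $-\nabla_{\mathcal{F}_{\lambda}}$, using \textbf{A3} and $h<R_{\mu}$ to keep $\|\nabla_{\mathcal{F}_{\lambda}}\|_{2}\geq\mu$ off $\mathcal{F}_{\lambda}$) is the right starting point, but the step you defer to ``genuine care'' --- joint continuity of the flow --- is not a technical difficulty to be patched: for the flow you define, it is false, and the fix you propose cannot work. Lieutier-type results (semiconcavity of $d_{K}$, uniqueness of trajectories, continuity of the flow map) concern the \emph{ascent} flow along $+\nabla_{K}$, which moves away from $K$; trajectories of that flow merge on the medial axis, so in the \emph{descent} direction trajectories are non-unique and any selection depends discontinuously on the starting point wherever the medial axis meets the tube. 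Under \textbf{A3} with $\mu<1$ the medial axis genuinely does meet the tube --- that is precisely what the $\mu$-reach is designed to allow. Concretely, suppose the complement of $\mathcal{F}_{\lambda}$ is locally a right-angle wedge (a corner, permitted here and excluded in \cite{Henneuse24a}). At a point of the bisector at distance $a$ from each wall, $\Gamma_{\mathcal{F}_{\lambda}}$ consists of the two foot points, and $\|\nabla_{\mathcal{F}_{\lambda}}\|_{2}=\sqrt{2}/2\geq\mu$, so the bisector lies inside the region where the $\mu$-reach condition holds. A point just off the bisector has a unique nearest point and descends perpendicularly onto one wall; a point on the bisector has $-\nabla_{\mathcal{F}_{\lambda}}$ pointing along the bisector and descends into the tip of the corner. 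The resulting time-$t$ maps and the limiting retraction jump across the bisector, so they are not continuous and do not define a deformation retraction. Your alternative description, ``the trajectory that minimizes $d_{\mathcal{F}_{\lambda}}$ as quickly as possible,'' fares no better: at a bisector point the steepest-descent directions are toward either one of the two nearest points (rate $1$), not along $-\nabla_{\mathcal{F}_{\lambda}}$ (rate $<1$), so that prescription is itself non-unique and again discontinuous.

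The paper circumvents exactly this obstruction by never integrating the raw field $-\nabla_{K}$: it reduces, after checking via \textbf{A1}--\textbf{A2} that $\mathcal{F}_{\lambda}=\bigcup_{i\in I_{\lambda}}\overline{M}_{i}$ so that \textbf{A3} yields $\operatorname{reach}_{\mu}(\mathcal{F}_{\lambda})\geq R_{\mu}$, to Theorem 12 of \cite{kim2020homotopy} (Lemma \ref{deformation retract}). The key idea there, missing from your proposal, is a smoothing step: one builds a locally finite cover $(U_{x_{i}})_{i}$ with a subordinate partition of unity $(\rho_{i})_{i}$ and replaces $-\nabla_{K}$ by $\overline{W}(x)=\sum_{i}\rho_{i}(x)W(x_{i})$, a genuinely continuous (smooth) field which integrates to an honest flow, continuous in initial conditions, while still decreasing $d_{K}$ at a rate of order $\mu^{2}$; trajectories reach $K$ by time $2d_{K}(x)/\mu^{2}$, and since $\|\overline{W}\|_{2}\leq 1$ the displacement is bounded by the elapsed time, which is where the bound $2d_{K}(x)/\mu^{2}$ comes from. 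A secondary error in your write-up: the displacement arithmetic is incoherent. Reparametrizing time cannot inflate displacement (displacement is bounded by arc length, which is parametrization-free), and the speed of $\dot{x}=-\nabla_{K}(x)$ is $\|\nabla_{K}\|_{2}\leq 1$, not $1/\|\nabla_{K}\|_{2}$; had your flow existed, your own arc-length bound $d_{\mathcal{F}_{\lambda}}(x)/\mu$ would already imply the claim (as $\mu\leq 1$). The exponent $2$ in the proposition is the price of the smoothing, not a reparametrization effect.
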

 \noindent We now have all the necessary ingredients to construct $\overline{\phi}$ and prove Theorem \ref{MainProp}.
\begin{thm}
\label{MainProp}
Let $r_{1}=\sqrt{d}/\mu$ and $r_{2}=\sqrt{d}(1+2/\mu^{2})(\sqrt{d}+\lceil r_{1}\rceil)$ and $h=R_{\mu}/(2\lceil r_{2}\rceil)$, there exist $\Tilde{C_{0}}$ and $\Tilde{C_{1}}$ such that, for all $t>0$,
$$\mathbb{P}\left(\sup\limits_{f\in S_{d}(\mu,R_{\mu})}d_{b}\left(\widehat{\operatorname{dgm}(f)},\operatorname{dgm}(f)\right)\geq \theta t\right)\leq \Tilde{C_{0}}\exp\left(-\Tilde{C_{1}}t^{2}\right).$$ 
\end{thm}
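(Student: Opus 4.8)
The plan is to build an explicit $\varepsilon$-interleaving between the true module $\mathbb{V}_{f,s}$ and the estimated image module $\widehat{\mathbb{V}}_{f,s}$, for $\varepsilon = \theta\|W\|_h$, and then invoke algebraic stability together with the $q$-tameness established in Appendix~\ref{appendix tameness}. The two morphisms are supplied by Propositions~\ref{inclusion} and~\ref{lemmaFiltEquiv}. First I would construct $\overline{\psi}:\mathbb{V}_{f,s}\to\widehat{\mathbb{V}}_{f,s}$: the lower inclusion $\mathcal{F}_{\lambda-\theta\|W\|_h}\subset\widehat{\mathcal{F}}_\lambda$ from Proposition~\ref{inclusion}, post-composed with the thickening inclusion $\widehat{\mathcal{F}}_\lambda\subset\widehat{\mathcal{F}}_\lambda^{\lceil r_2\rceil h}$, induces on homology a map into $\operatorname{Im}(\rho_\lambda)$ (since it factors through $H_s(\widehat{\mathcal{F}}_\lambda)$), giving $\overline{\psi}_\lambda:H_s(\mathcal{F}_{\lambda-\theta\|W\|_h})\to\operatorname{Im}(\rho_\lambda)$, i.e.\ a shift by $\theta\|W\|_h$. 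Commutativity of the interleaving triangles involving $\overline{\psi}$ is automatic because every map in sight is induced by an inclusion of spaces.

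The construction of $\overline{\phi}:\widehat{\mathbb{V}}_{f,s}\to\mathbb{V}_{f,s}$ is the delicate half, and I expect it to be the main obstacle. The upper inclusion of Proposition~\ref{inclusion} gives $\widehat{\mathcal{F}}_\lambda\subset\mathcal{F}_{\lambda+\theta\|W\|_h}^{(\sqrt{d}+\lceil\sqrt{d}/\mu\rceil)h}$, but the right-hand side is a $\|\cdot\|_\infty$-offset, not a sublevel set, so it does not directly yield a morphism into $\mathbb{V}_{f,s}$. The idea is to use Proposition~\ref{lemmaFiltEquiv}: the deformation retraction $F_{\lambda,h}$ from $\overline{B}_2(\mathcal{F}_\lambda,h)$ onto $\mathcal{F}_\lambda$ induces an isomorphism $F^*_{\lambda,h}(\cdot,1):H_s(\overline{B}_2(\mathcal{F}_\lambda,h))\xrightarrow{\ \sim\ }H_s(\mathcal{F}_\lambda)$, which I would use to push homology of the offset back onto the genuine sublevel set. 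Here the calibration $r_2=\sqrt{d}(1+2/\mu^2)(\sqrt{d}+\lceil r_1\rceil)$ and $h=R_\mu/(2\lceil r_2\rceil)$ enters: the $\|\cdot\|_\infty$-offset of radius $(\sqrt{d}+\lceil r_1\rceil)h$ sits inside a Euclidean $\overline{B}_2(\mathcal{F}_\lambda,\cdot)$ of radius below $R_\mu$ (using $\|\cdot\|_\infty\le\|\cdot\|_2\le\sqrt{d}\|\cdot\|_\infty$), so Proposition~\ref{lemmaFiltEquiv} applies; and the extra $r_2$-thickening in the image module $\operatorname{Im}(\rho_\lambda)$ must be wide enough to swallow the displacement $2d_{\mathcal{F}_\lambda}(x)/\mu^2$ incurred by the retraction $F_{\lambda,h}$, so that the retracted cycles land in the correct offset and the composite lands with the prescribed index shift. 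The crux is checking that restricting to the \emph{image} $\operatorname{Im}(\rho_\lambda)$ (rather than all of $H_s(\widehat{\mathcal{F}}_\lambda)$) makes this composite well-defined and commuting — this is exactly the topological regularization that kills the spurious boundary cycles of Figure~\ref{fig:reach}.b, and verifying the interleaving squares for $\overline{\phi}$ will require chasing the commuting diagram \eqref{diagBase} together with the retraction-induced maps, which I expect to be the most technical point.

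Having produced $\overline{\psi}$ and $\overline{\phi}$ with all four interleaving diagrams of Definition~\ref{def:interleaving} commuting at shift $\varepsilon=\theta\|W\|_h$, the algebraic stability theorem yields
$$d_b\bigl(\operatorname{dgm}(\mathbb{V}_{f,s}),\widehat{\operatorname{dgm}_s(f)}\bigr)\le\theta\|W\|_h,$$
uniformly over $f\in S_d(\mu,R_\mu)$ and over all homological degrees $s$, whence $\sup_{f}d_b(\widehat{\operatorname{dgm}(f)},\operatorname{dgm}(f))\le\theta\|W\|_h$. Crucially the interleaving width does not depend on $f$, only on the noise through the data-driven quantity $\|W\|_h$.

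The final step is purely probabilistic. With $h$ fixed by the stated calibration, $\|W\|_h=\sup_{H\in C_h}|W(H)|/h^d$ is a supremum of $(1/h)^d$ centered Gaussian variables, where $W(H)=\int_H dW\sim\mathcal{N}(0,h^d)$, so each $|W(H)|/h^d$ is $|\mathcal{N}(0,h^{-d})|$ with standard deviation $h^{-d/2}$. I would apply a Gaussian maximal inequality (union bound with the standard tail $\mathbb{P}(|Z|\ge u)\le 2e^{-u^2/2}$) to the finite family, giving a sub-Gaussian tail
$$\mathbb{P}\bigl(\|W\|_h\ge t\bigr)\le\widetilde{C_0}\exp\bigl(-\widetilde{C_1}t^2\bigr)$$
for constants depending only on $d$ and on the fixed $h$ (hence only on $\mu,R_\mu,d$). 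Combining this with the deterministic bound $\sup_f d_b\le\theta\|W\|_h$ gives
$$\mathbb{P}\Bigl(\sup_{f\in S_d(\mu,R_\mu)}d_b\bigl(\widehat{\operatorname{dgm}(f)},\operatorname{dgm}(f)\bigr)\ge\theta t\Bigr)\le\mathbb{P}\bigl(\|W\|_h\ge t\bigr)\le\widetilde{C_0}\exp\bigl(-\widetilde{C_1}t^2\bigr),$$
which is exactly the claimed bound; the parametric rate $\theta$ is then immediate since the tail is sub-Gaussian in $t$.
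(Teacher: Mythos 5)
Your proposal follows essentially the same route as the paper's proof: the same two morphisms $\overline{\psi}$ (induced by the lower inclusion of Proposition~\ref{inclusion}) and $\overline{\phi}$ (the upper inclusion composed with the retraction-induced map of Proposition~\ref{lemmaFiltEquiv}), the same appeal to algebraic stability together with the $q$-tameness from Appendix~\ref{appendix tameness}, and the same union-bound Gaussian tail for $\|W\|_{h}$. The diagram chase you defer as ``the most technical point'' is carried out in the paper exactly along the lines you anticipate: homotopy invariance of singular homology plus the displacement bound $2d_{\mathcal{F}_{\lambda}}(x)/\mu^{2}$ of Proposition~\ref{lemmaFiltEquiv}, which the calibration of $r_{2}$ ensures keeps the retraction homotopy inside $\widehat{\mathcal{F}}_{\lambda+2\theta\|W\|_{h}}^{\lceil r_{2}\rceil h}$, so that cycles in $\operatorname{Im}(\rho_{\lambda})$ and their retracted images represent the same class there.
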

\begin{proof}
Define the morphism,
$$\overline{\psi}_{\lambda}:H_{s}\left(\mathcal{F}_{\lambda}\right)\rightarrow H_{s}\left(\widehat{\mathcal{F}}_{\lambda+\theta||W||_{h}}\right)$$ 
induced by the inclusion $\mathcal{F}_{\lambda}\subset \widehat{\mathcal{F}}_{\lambda+\theta||W||_{h}}$ given by Proposition \ref{inclusion}.\\\\
For the second morphism, denote $k_{2}=k_{1}+\lceil r_{2}\rceil$ and consider, $$j_{1,\lambda}:\operatorname{Im}\left(\rho_{\lambda}\right)\rightarrow H_{s}\left(\overline{B}_{2}\left(\mathcal{F}_{\lambda+\theta||W||_{h}}, k_{2}h\right)\right)$$
the map induced by the inclusion $\widehat{\mathcal{F}}_{\lambda}^{\lceil  r_{2}\rceil h}\subset \overline{B}_{2}\left(\mathcal{F}_{\lambda+\theta||W||_{h}}, k_{2}h\right)$ given by Proposition \ref{inclusion}, and,
$$F^{*}_{\lambda+\theta||W||_{h},k_{2}h}:H_{s}\left(\overline{B}_{2}\left(\mathcal{F}_{\lambda+\theta||W||_{h}}, k_{2}h\right)\right)\rightarrow H_{s}\left(\mathcal{F}_{\lambda+\theta||W||_{h}}\right)$$
induced by the deformation retract of Proposition \ref{lemmaFiltEquiv}. We then define,
$$
\left\{
    \begin{array}{ll}
    \overline{\phi}_{\lambda}:\operatorname{Im}\left(\rho_{\lambda}\right)\rightarrow H_{2}\left(\mathcal{F}_{\lambda+\theta||W||_{h}}\right)\\
    \overline{\phi}_{\lambda}=F^{*}_{\lambda+\theta||W||_{h},k_{2}h}\circ j_{1,\lambda}
    \end{array}
\right.
$$
We now show that $\overline{\psi}$ and $\overline{\phi}$ induce an interleaving between $\widehat{\mathbb{V}}_{f,s}$ and $\mathbb{V}_{s,f}$. More precisely, we show that the following diagrams commute, for all $\lambda<\lambda^{'}$,
\begin{equation}
\label{diagram2}
\begin{tikzcd}
	{\operatorname{Im}\left(\rho_{\lambda}\right)} && {\operatorname{Im}\left(\rho_{\lambda^{'}}\right)} \\
	\\
	{H_{s}\left(\mathcal{F}_{\lambda+\theta||W||_{h}}\right)} && {H_{s}\left(\mathcal{F}_{\lambda^{'}+\theta||W||_{h}}\right)}
	\arrow[from=1-1, to=3-1,"\overline{\phi}_{\lambda}"]
	\arrow[from=1-1, to=1-3,"\hat{v}_{\lambda}^{\lambda^{'}}"]
	\arrow[from=1-3, to=3-3,"\overline{\phi}_{\lambda^{'}}"]
	\arrow[from=3-1, to=3-3,"v_{\lambda+\theta||W||_{h}}^{\lambda^{'}+\theta||W||_{h}}"]
\end{tikzcd}
\end{equation}
\begin{equation}
\label{diagram1}
\begin{tikzcd}
	{H_{s}\left(\mathcal{F}_{\lambda}\right)} &&& {H_{s}\left(\mathcal{F}_{\lambda^{'}}\right)} \\
	\\
	{\operatorname{Im}\left(\rho_{\lambda+\theta||W||_{h}}\right)} &&& {\operatorname{Im}\left(\rho_{\lambda^{'}+\theta||W||_{h}}\right)}
	\arrow[from=1-1, to=3-1,"\overline{\psi}_{\lambda}"]
	\arrow[from=1-1, to=1-4,"v_{\lambda}^{\lambda^{'}}"]
	\arrow[from=1-4, to=3-4,"\overline{\psi}_{\lambda^{'}}"]
	\arrow[from=3-1, to=3-4,"\hat{v}_{\lambda+\theta||W||_{h}}^{\lambda^{'}+\theta||W||_{h}}"]
\end{tikzcd}
\end{equation}
\begin{equation}
\label{diagram3}
\begin{tikzcd}
	{\operatorname{Im}\left(\rho_{\lambda}\right)} && {\operatorname{Im}\left(\rho_{\lambda+2\theta||W||_{h}}\right)} \\
	\\
	& {H_{s}\left(\mathcal{F}_{\lambda+\theta||W||_{h}}\right)}
	\arrow[from=1-1, to=3-2,"\overline{\phi}_{\lambda}"]
	\arrow[from=3-2, to=1-3,"\overline{\psi}_{\lambda+\theta||W||_{h}}"]
	\arrow[from=1-1, to=1-3,"\hat{v}_{\lambda}^{\lambda+2\theta||W||_{h}}"]
\end{tikzcd}
\end{equation}
\begin{equation}
\label{diagram4}
\begin{tikzcd}
	{H_{s}\left(\mathcal{F}_{\lambda}\right)} && {H_{s}\left(\mathcal{F}_{\lambda+2\theta||W||_{h}}\right)} \\
	\\
	& {\operatorname{Im}\left(\rho_{\lambda+\theta||W||_{h}}\right)}
	\arrow[from=1-1, to=1-3,"v_{\lambda}^{\lambda+2\theta||W||_{h}}"]
	\arrow[from=1-1, to=3-2,"\overline{\psi}_{\lambda}"]
	\arrow[from=3-2, to=1-3,"\overline{\phi}_{\lambda+\theta||W||_{h}}"]
\end{tikzcd}
\end{equation}
\begin{itemize}
\item \textbf{Diagram \ref{diagram2}} : We can rewrite the diagram as  (unspecified maps are simply induced by set inclusion),
    \begin{equation*}
        \begin{tikzcd}
	{\operatorname{Im}\left(\rho_{\lambda}\right)} && {\operatorname{Im}\left(\rho_{\lambda^{'}}\right)} \\
	{H_{s}\left(\overline{B}_{2}\left(\mathcal{F}_{\lambda+\theta||W||_{h}},k_{2}h\right)\right)} && {H_{s}\left(\overline{B}_{2}\left(\mathcal{F}_{\lambda^{'}+\theta||W||_{h}},k_{2}h\right)\right)} \\
	{H_{s}\left(\mathcal{F}_{\lambda+\theta||W||_{h}}\right)} && {H_{s}\left(\mathcal{F}_{\lambda^{'}+\theta||W||_{h}}\right)}
	\arrow[ from=2-1, to=2-3]
	\arrow["F^{*}_{\lambda^{'}+\theta||W||_{h},k_{2}h}", from=2-3, to=3-3]
	\arrow[ from=1-3, to=2-3]
	\arrow[ from=3-3, to=2-3]
	\arrow[ from=1-1, to=2-1]
	\arrow[ from=1-1, to=1-3]
	\arrow[ from=3-1, to=3-3]
	\arrow["F^{*}_{\lambda+\theta||W||_{h},k_{2}h}",from=2-1, to=3-1]
	\arrow[ from=3-1, to=2-1]
\end{tikzcd}
\end{equation*}
By inclusions the upper face commutes, and the lower face commutes, as for all $\lambda\in\mathbb{R}$, $F^{*}_{\lambda+\theta||W||_{h},k_{2}h}$ comes from a deformation retract. Thus, the diagram is commutative.
\item \textbf{Diagram \ref{diagram1}}: One can check that it can be decomposed as diagram \ref{diagram2} and thus the same reasoning applies.
\item \textbf{Diagram \ref{diagram3}}: Let $[C]\in \operatorname{Im}\left(\rho_{\lambda}\right)$. By construction, we can suppose $C\in C_{s}(\widehat{\mathcal{F}}_{\lambda})$. The map $\overline{\phi}_{\lambda}$ maps $[C]$ to $[C^{'}]$ with,
$$C^{'}=F^{\#}_{\lambda+\theta||W||_{h},k_{2}h}(C,1).$$
Let denote $\overline{F}_{\lambda+\theta||W||_{h},k_{2}h}$ the restriction of $F_{\lambda+\theta||W||_{h},k_{2}h}$ to $F_{\lambda+\theta||W||_{h},k_{2}h}(\widehat{\mathcal{F}}_{\lambda},[0,1])$. It is a deformation retraction from $F_{\lambda+\theta||W||_{h},k_{2}h}(\widehat{\mathcal{F}}_{\lambda},[0,1])$ onto $F_{\lambda+\theta||W||_{h},k_{2}h}(\widehat{\mathcal{F}}_{\lambda},1)$. Thus, by homotopy invariance of singular homology,
\begin{align*}
[C]&=\left[\overline{F}^{\#}_{\lambda+\theta||W||_{h},k_{2}h}(C,1)\right]\\
&=\left[F^{\#}_{\lambda+\theta||W||_{h},k_{2}h}(C,1)\right]\\
&=[C^{'}]\in H_{s}\left(F_{\lambda+\theta||W||_{h},k_{2}h}(\widehat{\mathcal{F}}_{\lambda},[0,1])\right).
\end{align*}
From the last assertion of Proposition \ref{lemmaFiltEquiv} and the upper inclusion of Proposition \ref{inclusion}, we know that,
$$F_{\lambda+\theta||W||_{h},k_{2}h}\left(\widehat{\mathcal{F}}_{\lambda},[0,1]\right)\subset \widehat{\mathcal{F}}_{\lambda+2\theta||W||_{h}}^{\lceil r_{2}\rceil h}$$
thus,
$$[C]=[C^{'}]\in H_{s}\left[\widehat{\mathcal{F}}_{\lambda+2\theta||W||_{h}}^{\lceil r_{2}\rceil h}\right].$$
As $\overline{\psi}_{\lambda+\theta||W||_{h}}$ is simply an inclusion map, it maps $[C^{'}]$ to  $[C^{'}]$, and hence,
$$[C]=[C^{'}]=\overline{\psi}_{\lambda+\theta||W||_{h}}\left(\overline{\phi}_{\lambda}\left([C]\right)\right)\in  H_{s}\left[\widehat{\mathcal{F}}_{\lambda+2\theta||W||_{h}}^{\lceil r_{2}\rceil h}\right]$$
which proves the commutativity.
\item \textbf{Diagram \ref{diagram4}}:
Let $C\in C_{s}\left(\mathcal{F}_{\lambda}\right)$, $\overline{\psi}_{\lambda}([C])=[C]$ as it is simply an inclusion induced map and 
$$\overline{\phi}_{\lambda+\theta||W||_{h}}([C])=\left[F_{\lambda+\theta||W||_{h}, k_{2}h}^{\#}(C,1)\right]=[C]$$
which establish the commutativity.
\end{itemize}
The commutativity of diagrams \ref{diagram2},\ref{diagram1},\ref{diagram3} and \ref{diagram4} means that $\widehat{\mathbb{V}}_{f,s}$ and $\mathbb{V}_{f,s}$ are $2\theta||W||_{h}$ interleaved, and thus we get from the algebraic stability theorem \citep{Chazal2009} that,
\begin{equation*}
d_{b}\left(\operatorname{dgm}\left(\widehat{\mathbb{V}}_{f,s}\right),\operatorname{dgm}\left(\mathbb{V}_{f,s}\right)\right)\leq 2\theta||W||_{h} 
\end{equation*}
and as it holds for all $s\in\mathbb{N}$ and $f\in S_{d}(\mu,R_{\mu})$,
$$\sup\limits_{f\in S_{d}(\mu,R_{\mu})}d_{b}\left(\widehat{\operatorname{dgm}(f)},\operatorname{dgm}(f)\right)\leq2\theta||W||_{h}.$$
We then conclude, using the concentration of $||W||_{h}$. As $W(H)/h^{d/2}$ is a standard Gaussian, we have,
\begin{align*}
\mathbb{P}\left(||W||_{h}\leq t \right)&=\mathbb{P}\left(\bigcup_{H\in C_h}\left\{|W(H)/h^{d}|\leq t \right\}\right)\\
&\leq |C_{h}|\mathbb{P}\left(|W(H)/h^{d/2}|\leq th^{d/2}\right)\\
&\leq 2\left(\frac{1}{h}\right)^{d}\exp\left(-\frac{h^{d}t^{2}}{2}\right)\\
&:= C_{0}\exp\left(-C_{1}t^{2}\right).
\end{align*}
With $C_{1}$ and $C_{2}$ constants depending on $\mu$, $R_{\mu}$ and $d$. Then, 
\begin{align*}
&\mathbb{P}\left(\sup\limits_{f\in S_{d}(\mu,R_{\mu})}d_{b}\left(\widehat{\operatorname{dgm}(f)},\operatorname{dgm}(f)\right)\geq \theta t\right)\\
&\leq \mathbb{P}\left(||W||_{h}\geq t/2\right)\\
&\leq C_{0}\exp\left(-C_{1}t^{2}/4\right)
\end{align*}
and the result follows.
\end{proof}
\noindent We can derive from this result bounds in expectation.
\begin{crl}
\label{estimation borne sup}
Let $p\geq 1$, 
$$\underset{f\in S_{d}(\mu,R_{\mu})}{\sup}\quad\mathbb{E}\left(d_{b}\left(\widehat{\operatorname{dgm}(f)},\operatorname{dgm}(f)\right)^{p}\right)\lesssim \theta^{p}$$
\end{crl}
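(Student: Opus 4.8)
The plan is to pass from the sub-Gaussian tail bound of Theorem \ref{MainProp} to a moment bound via the standard layer-cake identity for the $p$-th moment of a nonnegative random variable. Writing $Z=\sup_{f\in S_{d}(\mu,R_{\mu})}d_{b}\left(\widehat{\operatorname{dgm}(f)},\operatorname{dgm}(f)\right)$, Theorem \ref{MainProp} gives $\mathbb{P}\left(Z\geq \theta t\right)\leq \Tilde{C_{0}}\exp\left(-\Tilde{C_{1}}t^{2}\right)$ for all $t>0$, where $\Tilde{C_{0}},\Tilde{C_{1}}$ depend only on $\mu,R_{\mu},d$ and not on the signal. Since for every fixed $f$ one has $d_{b}\left(\widehat{\operatorname{dgm}(f)},\operatorname{dgm}(f)\right)\leq Z$ pointwise as random variables, it suffices to bound $\mathbb{E}\left[Z^{p}\right]$ and then take the supremum over $f$ outside the expectation, yielding $\sup_{f}\mathbb{E}\left[d_{b}(\cdot)^{p}\right]\leq \mathbb{E}\left[Z^{p}\right]$.

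First I would apply the identity $\mathbb{E}\left[Z^{p}\right]=\int_{0}^{\infty}p\,t^{p-1}\,\mathbb{P}\left(Z>t\right)\,dt$, and after the change of variables $t\mapsto \theta t$ rewrite it as $\theta^{p}\int_{0}^{\infty}p\,t^{p-1}\,\mathbb{P}\left(Z>\theta t\right)\,dt$. I would then insert the tail bound, using the trivial estimate $\mathbb{P}\left(Z>\theta t\right)\leq 1$ for small $t$ and the sub-Gaussian estimate $\mathbb{P}\left(Z>\theta t\right)\leq \Tilde{C_{0}}\exp\left(-\Tilde{C_{1}}t^{2}\right)$ for large $t$; equivalently, one bounds throughout by $\min\left(1,\Tilde{C_{0}}\exp\left(-\Tilde{C_{1}}t^{2}\right)\right)$.

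The remaining integral $\int_{0}^{\infty}p\,t^{p-1}\min\left(1,\Tilde{C_{0}}\exp\left(-\Tilde{C_{1}}t^{2}\right)\right)\,dt$ is a finite constant $C_{p}$ depending only on $p,\Tilde{C_{0}},\Tilde{C_{1}}$: the polynomial factor $t^{p-1}$ is integrable against the Gaussian-type tail because $\exp\left(-\Tilde{C_{1}}t^{2}\right)$ decays faster than any power of $t$. Concretely one can split the integral at $t=1$, bounding the part over $[0,1]$ by $1$ and recognizing the part over $[1,\infty)$ as a multiple of an incomplete Gamma integral. This gives $\sup_{f\in S_{d}(\mu,R_{\mu})}\mathbb{E}\left[d_{b}\left(\widehat{\operatorname{dgm}(f)},\operatorname{dgm}(f)\right)^{p}\right]\leq C_{p}\,\theta^{p}$, which is exactly the claimed bound $\lesssim \theta^{p}$.

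There is no genuine obstacle here; the corollary is a routine consequence of the exponential concentration. The only points requiring care are that the constants $\Tilde{C_{0}},\Tilde{C_{1}}$ are uniform in $f$ (which holds by the statement of Theorem \ref{MainProp}, where the supremum already sits inside the probability) and that the moment integral converges for every $p\geq 1$ (which follows from the Gaussian tail dominating the polynomial). The uniformity over $S_{d}(\mu,R_{\mu})$ is therefore inherited directly, and the implicit constant in $\lesssim$ depends only on $p,\mu,R_{\mu}$ and $d$.
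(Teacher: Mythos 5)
Your proof is correct and follows essentially the same route as the paper: both pass from the sub-Gaussian tail of Theorem \ref{MainProp} to the moment bound via the layer-cake identity, the only cosmetic differences being that you integrate $p\,t^{p-1}\,\mathbb{P}(Z>t)$ against the supremum variable $Z$ (which even yields the slightly stronger bound $\mathbb{E}[Z^{p}]\lesssim\theta^{p}$), whereas the paper applies the identity directly to $d_{b}(\cdot)^{p}/\theta^{p}$ for fixed $f$, producing the exponent $t^{2/p}$ in the integrand instead of a change of variables. No gap to report.
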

\begin{proof}
 The sub-Gaussian concentration provided by Theorem \ref{MainProp}, gives that, for all $t>0$,
$$\mathbb{P}\left(\frac{d_{b}\left(\widehat{\operatorname{dgm}(f)},\operatorname{dgm}(f)\right)}{\theta}\geq t\right)\leq \Tilde{C}_{0}\exp\left(-\Tilde{C}_{1}t^{2}\right).$$
 Now, we have,
 \begin{align*}
 &\quad\mathbb{E}\left(\frac{d_{b}\left(\widehat{\operatorname{dgm}(f)},\operatorname{dgm}(f)\right)^{p}}{\theta^{p}}\right)\\
&=\int_{0}^{+\infty}\mathbb{P}\left(\frac{d_{b}\left(\widehat{\operatorname{dgm}(f)},\operatorname{dgm}(f)\right)^{p}}{\theta^{p}}\geq t\right)dt\\
     &\leq \int_{0}^{+\infty}\Tilde{C}_{0}\exp\left(-\Tilde{C}_{1}t^{\sfrac{2}{p}}\right)dt<+\infty.
\end{align*}
\end{proof}
\subsection{Proof of Proposition \ref{inclusion}}
\label{proof inclusion}
This section is dedicated to the proof of Proposition \ref{inclusion}. It relies on Lemma \ref{lmm SSDO7} (stated in Section \ref{Background section}) and on the following lemma.
\begin{lmm}
\label{lmm1}
 Let $f:[0,1]^{d}\rightarrow \mathbb{R}$ and $h>0$. Let $H\subset \mathcal{F}_{\lambda+\theta||W||_{h}}^{c}\cap C_{h}$ and $H^{'}\subset \mathcal{F}_{\lambda-\theta||W||_{h}}\cap C_{h}$.
We then have that,
$$\int_{H}dX-\int_{H}\lambda>0 \text{ and } \int_{H^{'}}dX-\int_{H}\lambda\leq 0.$$
\end{lmm}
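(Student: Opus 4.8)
The plan is to reduce both inequalities to the definition of the white-noise increment together with the uniform control on $W$ provided by $||W||_{h}$. The starting observation is that, for any cube $H\in C_{h}$, the model $dX=f\,dt+\theta\,dW$ gives the decomposition
$$\int_{H}dX=\int_{H}f\,dt+\theta\,W(H),$$
while $\int_{H}\lambda=\lambda h^{d}$ since $H$ has volume $h^{d}$. Moreover, directly from $||W||_{h}=\sup_{H\in C_{h}}|W(H)|/h^{d}$ we have $|W(H)|\leq h^{d}||W||_{h}$ for every cube in the grid; this is the only place the normalization of $||W||_{h}$ enters.

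For the first inequality I would use that $H\subset\mathcal{F}_{\lambda+\theta||W||_{h}}^{c}$ translates into the pointwise bound $f(x)>\lambda+\theta||W||_{h}$ for all $x\in H$ (the complement of a sublevel set being a strict superlevel set). Integrating over $H$ yields $\int_{H}f\,dt>(\lambda+\theta||W||_{h})h^{d}$, the inequality being strict because $f-(\lambda+\theta||W||_{h})$ is positive almost everywhere on $H$. Substituting into the decomposition,
$$\int_{H}dX-\int_{H}\lambda=\int_{H}f\,dt+\theta W(H)-\lambda h^{d}>\theta||W||_{h}h^{d}+\theta W(H)\geq 0,$$
where the final bound uses $\theta W(H)\geq-\theta h^{d}||W||_{h}$.

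The second inequality is the symmetric computation. Now $H^{'}\subset\mathcal{F}_{\lambda-\theta||W||_{h}}$ gives $f(x)\leq\lambda-\theta||W||_{h}$ on $H^{'}$, hence $\int_{H^{'}}f\,dt\leq(\lambda-\theta||W||_{h})h^{d}$, and therefore
$$\int_{H^{'}}dX-\int_{H^{'}}\lambda\leq-\theta||W||_{h}h^{d}+\theta W(H^{'})\leq 0,$$
using $\theta W(H^{'})\leq\theta h^{d}||W||_{h}$. I read the second inequality of the statement as $\int_{H^{'}}dX-\int_{H^{'}}\lambda\leq 0$, the subscript $H$ appearing there being a typo for $H^{'}$.

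I do not expect any genuine obstacle here: once the decomposition of $\int_{H}dX$ and the bound $|W(H)|\leq h^{d}||W||_{h}$ are in place, the argument is a one-line rearrangement. The only points deserving care are the strictness in the first inequality, which follows from $f$ exceeding the threshold on a full-measure subset of $H$, and the fact that the membership hypotheses give genuinely pointwise control on $f$, which holds since complements of sublevel sets are superlevel sets of the form $\{f>a\}$.
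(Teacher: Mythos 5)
Your proposal is correct and follows essentially the same route as the paper: decompose $\int_{H}dX=\int_{H}f+\theta W(H)$, use the pointwise bounds on $f$ coming from membership in $\mathcal{F}_{\lambda+\theta||W||_{h}}^{c}$ or $\mathcal{F}_{\lambda-\theta||W||_{h}}$, and control the noise term by $|W(H)|\leq h^{d}||W||_{h}$. You additionally justify the strictness of the first inequality and correctly identify the subscript $H$ in the second inequality of the statement as a typo for $H^{'}$, both of which the paper leaves implicit.
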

\begin{proof}
Let first consider the case where in $H^{\prime} \subset \mathcal{F}_{\lambda-\theta\|W\|_{h}}\cap C_{h}$. We have,
$$
\begin{aligned}
& \int_{H^{\prime}} d X-\int_{H^{\prime}} \lambda \\
& =\int_{H^{\prime}}(f-\lambda)+\theta \int_{H^{\prime}} d W \\
& \leq-\theta\|W\|_{h}h^{d}+\theta\|W\|_{h}h^{d}= 0.
\end{aligned}
$$
Now, let  $H\subset \mathcal{F}_{\lambda+\theta||W||_{h}}^{c}\cap C_{h}$, we have,
$$
\begin{aligned}
& \int_{H} d X-\int_{H} \lambda \\
& =\int_{H}(f-\lambda)+\theta \int_{H} d W \\
& >\theta\|W\|_{h}h^{d}-\theta\|W\|_{h}h^{d}= 0.
\end{aligned}
$$
\end{proof}
\begin{proof}[Proof of Proposition \ref{inclusion}]
We begin by proving the lower inclusion, let $x\in \mathcal{F}_{\lambda-\theta||W||_{h}}$. Without loss of generality, let suppose $x\in M_{i}$, or $x\in \partial M_{i}$ and $\liminf_{z\in M_{i}\rightarrow x}f(z)\leq f(x)$. If,
$$\overline{B}_{2}\left(x,\sqrt{d}h\right)\subset\left(\bigcup\limits_{i=1}^{l}\partial M_{i}\right)^{c}$$
then, $H_{x,h}$, the hypercube of $C_{h,\lambda}$ containing $x$ is included in $\overline{M}_{i}$.  Assumption \textbf{A1} and \textbf{A2} then gives $H_{x,h}\subset \mathcal{F}_{\lambda-\theta||W||_{h}}$. Hence, it follows from Lemma \ref{lmm1} that $H_{x,h}\in C_{h,\lambda}$ and consequently $x\in \widehat{\mathcal{F}}_{\lambda}$. Else, as $\sqrt{d}h/\mu<R_{\mu}$, by Lemma \ref{lmm SSDO7} (which we can apply thanks to Assumption \textbf{A3}), there exists,
$$y\in \left(B_{2}\left(\bigcup\limits_{i=1}^{l}\partial M_{i},{\sqrt{d}h}\right)\right)^{c}\cap M_{i}  \text{ such that } ||x-y||_{2}\leq \sqrt{d}h/\mu.$$
Let $H_{y,h}$ the closed hypercube of $C_{h}$ containing $y$. Hence, $H_{y,h}\subset \overline{M_{i}}.$. Then, Assumption \textbf{A1} and \textbf{A2} ensure that,
$$H_{y,h}\subset \mathcal{F}_{\lambda-\theta||W||_{h}}.$$
Then, Lemma \ref{lmm1} gives $H_{y,h}\in C_{h,\lambda}$ and thus, as $x\in H_{y,h}^{\sqrt{d}h/\mu}$, $x\in \widehat{\mathcal{F}}_{\lambda}$, which proves the lower inclusion.\\\\
For the upper inclusion, let $x\in\left(\mathcal{F}_{\lambda+\theta||W||_{h}}^{\sqrt{d}h}\right)^{c}$, and  $H_{x,h}$ the hypercube of $C_{h}$ containing $x$. We then have, $H_{x,h}\subset \mathcal{F}_{\lambda+\theta||W||_{h}}^{c}$. Hence, Lemma \ref{lmm1} gives that,
$$H_{x,h}\subset \left(\bigcup\limits_{H\in C_{h,\lambda}}H\right)^{c}$$
and thus,
$$\bigcup\limits_{H\in C_{h,\lambda}}H\subset \mathcal{F}_{\lambda+\theta||W||_{h}}^{\sqrt{d}h}.$$
Consequently,
$$\left(\bigcup\limits_{H\in C_{h,\lambda}}H\right)^{\lceil\sqrt{d}/\mu\rceil h}=\widehat{\mathcal{F}}_{\lambda}\subset \mathcal{F}_{\lambda+\theta||W||_{h}}^{(\sqrt{d}+\lceil\sqrt{d}/\mu\rceil)h}$$
and the proof is complete.
\end{proof}
\subsection{Proof of Proposition \ref{lemmaFiltEquiv}}
\label{proof lemmaFiltequiv}
Proposition \ref{lemmaFiltEquiv} is a direct corollary of Theorem 12 from \cite{kim2020homotopy}.
\begin{lmm}{\citep[][Theorem 12]{kim2020homotopy}}
\label{deformation retract}
Let $K\subset[0,1]^{d}$, for all $0\leq r<\operatorname{reach}_{\mu}(K)$,
$\overline{B}_{2}(K,r)$ retracts by deformation onto $K$ and the associated deformation retraction $F: \overline{B}_{2}(K,r)\times[0,1]\rightarrow \overline{B}_{2}(K,r)$ verifies for all $x\in \overline{B}_{2}(K,r)$ and all $t\in[0,1]$, $F(x,t)\in \overline{B}_{2}(x, 2d_{K}(x)/\mu^{2})$.
\end{lmm}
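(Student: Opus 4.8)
The plan is to realise $F$ as the time-reparametrised flow of the distance function $d_{K}$ contracting $\overline{B}_{2}(K,r)$ onto $K$ along minus the generalized gradient $\nabla_{K}$ of Section \ref{Background section 1}. The first step is to convert the hypothesis $r<\operatorname{reach}_{\mu}(K)$ into the absence of critical points on the tube: by the very definition of the $\mu$-reach, $\|\nabla_{K}(x)\|_{2}\geq\mu$ for every $x$ with $0<d_{K}(x)\leq r$. This uniform lower bound is the quantitative input that drives every later estimate, since it prevents the contracting flow from stalling and controls trajectory lengths.

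Next I would construct the contracting flow $C(\cdot,x)$ as the solution of $\partial_{t}C(t,x)=-\nabla_{K}(C(t,x))$ with $C(0,x)=x$. Because $\nabla_{K}$ is discontinuous across the medial axis, this must be read in the sense of the distance-function flow (a uniform limit of Euler schemes), and establishing its existence together with \emph{joint} continuity in $(t,x)$ is the analytic heart of the argument; the favourable point is that flowing toward $K$ pushes points into the increasingly regular region, where $\|\nabla_{K}\|_{2}$ is close to $1$, so the contracting flow stays well posed on the whole tube. The smallest-enclosing-ball characterisation $0\in\operatorname{conv}\{\,y-\Theta_{K}(x):y\in\Gamma_{K}(x)\,\}$ yields $\max_{y\in\Gamma_{K}(x)}\langle x-y,\,x-\Theta_{K}(x)\rangle\geq\|x-\Theta_{K}(x)\|_{2}^{2}$, whence along the flow $\tfrac{d}{dt}d_{K}(C(t,x))\leq-\|\nabla_{K}(C(t,x))\|_{2}^{2}\leq-\mu^{2}$. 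Thus $d_{K}$ is strictly decreasing and each trajectory meets $K$ in finite time $T(x)\leq d_{K}(x)/\mu^{2}$.

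I would then reparametrise each trajectory by its distance level: let $F(x,t)$ be the unique point of the trajectory issued from $x$ at which $d_{K}=(1-t)\,d_{K}(x)$, which is well defined because $d_{K}$ is continuous and strictly decreasing along the flow. This gives at once $F(x,0)=x$, $F(x,1)\in d_{K}^{-1}(0)=K$, and $F(a,t)=a$ for $a\in K$ (points of $K$ are fixed by the flow), while joint continuity of $F$ follows from continuity of the flow and of $d_{K}$. Hence $F$ is a deformation retraction of $\overline{B}_{2}(K,r)$ onto $K$.

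It remains to bound the displacement. For any $x$ and $t$, $\|F(x,t)-x\|_{2}$ is at most the length of the flow arc joining them; since the arc-length element is $\|\dot C\|_{2}=\|\nabla_{K}\|_{2}$ while $d_{K}$ decreases at rate at least $\|\nabla_{K}\|_{2}^{2}$, changing variables to $d_{K}$ gives a length at most $\int_{0}^{d_{K}(x)}\|\nabla_{K}\|_{2}^{-1}\,d(d_{K})\leq d_{K}(x)/\mu$. As $\mu\in(0,1]$ this is bounded by $2\,d_{K}(x)/\mu^{2}$, so $F(x,t)\in\overline{B}_{2}(x,2\,d_{K}(x)/\mu^{2})$, as claimed. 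The genuine obstacle is not this bookkeeping but the second step: making the contracting distance-function flow rigorous when $\nabla_{K}$ jumps — its existence, the (harmless) non-injectivity permitted for a retraction, and above all joint continuity up to $K$ — which is precisely the technical content imported from \citep{kim2020homotopy}.
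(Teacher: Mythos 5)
There is a genuine gap, and it sits exactly where you placed the ``analytic heart'' of the argument. You propose to flow along $-\nabla_{K}$ itself, read as a uniform limit of Euler schemes, and you defer existence and joint continuity of this contracting flow to \citep{kim2020homotopy}. But that is not what that reference (or the paper's Appendix \ref{appendix : def retract}) establishes, and the step fails as stated: the Euler-scheme theory for the discontinuous distance gradient (Lieutier-style) applies to the \emph{expanding} flow along $+\nabla_{K}$, where the medial-axis discontinuity is attractive in forward time, so trajectories merge and one has forward uniqueness and continuity. Time-reversed, along $-\nabla_{K}$, the discontinuity is repulsive: forward solutions are non-unique and any selection yields a retraction that is discontinuous across the medial axis inside the tube. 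Concretely, take $K=([0,1]\times\{0\})\cup(\{0\}\times[0,1])$. On the bisector $\{x=y,\,x>0\}$ one computes $\|\nabla_{K}\|_{2}=1/\sqrt{2}$, so for $\mu<1/\sqrt{2}$ the bisector lies well inside the region where $r<\operatorname{reach}_{\mu}(K)$; there $-\nabla_{K}$ points down the bisector, while immediately on either side it points straight at the nearby leg of $K$. From a bisector point $(a,a)$, both the bisector trajectory (ending at the corner $(0,0)$) and trajectories peeling off to either leg are admissible solutions, and the endpoint map of any selection jumps from $(a,0)$ to $(0,0)$ to $(0,a)$ as the initial point crosses the bisector --- so $F(\cdot,1)$ is not continuous and $F$ is not a deformation retraction. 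Your stated reason for well-posedness (``flowing toward $K$ pushes points into the increasingly regular region, where $\|\nabla_{K}\|_{2}$ is close to $1$'') is false in general: in this example the medial axis reaches all the way down to $K$ at the corner, and $\|\nabla_{K}\|_{2}$ equals $1/\sqrt{2}$ along it at every scale.

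The actual proof avoids this by never flowing $-\nabla_{K}$ directly: \citep{kim2020homotopy} replace it by the regularized field $\overline{W}(x)=\sum_{i}\rho_{i}(x)W(x_{i})$ built from a locally finite cover and a partition of unity. This field is continuous (indeed smooth enough for classical ODE theory), so its flow exists, is unique, and depends continuously on initial conditions --- continuity of $F$ comes for free --- while the lower bound $\|\nabla_{K}\|_{2}\geq\mu$ survives the averaging well enough that $d_{K}$ still decreases at rate of order $\mu^{2}$ along the flow, giving arrival in $K$ by time $2d_{K}(x)/\mu^{2}$ and, since $\|\overline{W}\|_{2}\leq 1$, the displacement bound $2d_{K}(x)/\mu^{2}$ of the lemma. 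It is telling that your computation yields the sharper bounds $T(x)\leq d_{K}(x)/\mu^{2}$ and arc length $\leq d_{K}(x)/\mu$: these are exactly what the \emph{exact} flow would give, and the factor $2$ in the statement is precisely the price paid for the smoothing. So the missing idea is the partition-of-unity regularization of the vector field; without it, the contracting-flow construction (and hence your reparametrization by distance levels, which presupposes a jointly continuous, $d_{K}$-monotone flow) cannot be completed.
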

\noindent The first part of the claim is Theorem 12 from \cite{kim2020homotopy} and the second part follows easily from their construction. Elements of proof can be found in Appendix \ref{appendix : def retract}.
\begin{proof}[Proof of Lemma \ref{lemmaFiltEquiv}]
Let $\lambda\in\mathbb{R}$ and denote $I_{\lambda}=\{i\in\{1,...,l\}\text{ s.t. }M_{i}\cap\mathcal{F}_{\lambda}\ne\emptyset\}$. By Assumption \textbf{A1} and \textbf{A2}, we have $\mathcal{F}_{\lambda}=\bigcup_{i\in I_{\lambda}}\overline{M}_{i}$. Thus, by Assumption \textbf{A3}, 
$$\operatorname{reach}_{\mu}\left(\mathcal{F}_{\lambda}\right)\geq R_{\mu}.$$
Hence, by Lemma \ref{deformation retract}, for all $h<R_{\mu}$, there exists a deformation retraction $F_{\lambda,h}:\overline{B}_{2}(\mathcal{F}_{\lambda},h)\times [0,1]\rightarrow \overline{B}_{2}(\mathcal{F}_{\lambda},h)$ from $\overline{B}_{2}(\mathcal{F}_{\lambda},h)$ onto $\mathcal{F}_{\lambda}$ that furthermore verifies, for all $(x,t)\in \overline{B}_{2}(\mathcal{F}_{\lambda},h)\times [0,1]$, $F_{\lambda}(x,t)\in \overline{B}_{2}(x, 2d_{\mathcal{F}_{\lambda}}(x)/\mu^{2})$.
\end{proof}
\section{Discussion}
This work extends the current scope on persistent homology inference from sublevel sets of a noisy signal. We motivate the need to depart from plug-in approaches, highlighting that cubical approximation fails to capture properly the persistent homology of piecewise-constant signals with discontinuities set having a positive $\mu-$reach. To overcome this issue, we propose a method based on image persistence. In the context of the Gaussian white noise model, we show that over the classes $S_{d}(\mu, R_{\mu})$ this approach is consistent and achieves parametric rates.\\\\
This study complements the results obtained in \cite{Henneuse24a}. Although we impose stronger conditions on the signals in the regular region, we significantly relax the conditions on the discontinuities set, allowing us to consider signals with more complex discontinuities, such as those with multiple points or arbitrarily narrow corners. This, once again, underscores the robustness of persistence diagram inference to signal irregularities and highlights the advantages of moving beyond analyses that rely on sup-norm stability.\\\\
As demonstrated in Appendix C of \cite{Henneuse24a}, the method and results presented here can be easily extended to the setting of non-parametric regression. Motivated by applications to mode detection, we are also exploring an extension to the density model, which will be the focus of future work.\\\\
A practical limitation of the proposed method is that it requires some knowledge of the parameters $\mu$ and $R_{\mu}$. This dependence is rather common in geometric inference and TDA. Developing adaptive approaches to handle such dependence on geometric measures is a research topic in its own right.\\\\
Also, from a computational perspective (to perform a numerical evaluation of this method), an important step forward would be to provide an efficient algorithm to compute the image persistence for general filtrations. This would permit to compare this method to plug-in approaches, specifically to the histogram estimator. While \cite{bauer2022efficient} offers an efficient such algorithm, it is tailored for Rips complexes, complicating its direct application to our context. In particular, this would permit to investigate practical selection of the parameters $h$, $r_{1}$ and $r_{2}$.
\section*{Acknowledgements}
The author would like to thank Frédéric Chazal and Pascal Massart for our (many) helpful discussions. The author acknowledge the support of the ANR TopAI chair (ANR–19–CHIA–0001). 
\bibliographystyle{plainnat}
\bibliography{bibliographie}
\appendix
\section{Proof for $q$-tameness}
\label{appendix tameness}
This section is devoted to prove the claim that the persistence diagrams we consider and estimated persistence diagrams we propose are well-defined, by proving that the underlying persistence modules are $q-$tame.
\begin{prp}
\label{LmmQtame1}
Let $f\in S_{d}(\mu,R_{\mu})$ then $f$ is $q$-tame.
\end{prp}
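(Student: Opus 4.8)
The plan is to reduce $q$-tameness of $\mathbb{V}_{f,s}$ to the finite-dimensionality of the homology of each individual sublevel set, and then to read off that finite-dimensionality from the geometric regularity supplied by \textbf{A3}.

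First I would make the sublevel sets explicit. Arguing as in the proof of Proposition \ref{lemmaFiltEquiv}, Assumptions \textbf{A1} and \textbf{A2} give, for every $\lambda\in\mathbb{R}$, $\mathcal{F}_{\lambda}=\bigcup_{i\,:\,\lambda_{i}\leq\lambda}\overline{M_{i}}$, a compact subset of $[0,1]^{d}$. Since $f$ attains only the finitely many values $\lambda_{1},\dots,\lambda_{l}$, the family $(\mathcal{F}_{\lambda})_{\lambda\in\mathbb{R}}$ realises at most $l+1$ distinct sets: it is empty for $\lambda<\min_{i}\lambda_{i}$, equals $[0,1]^{d}$ for $\lambda\geq\max_{i}\lambda_{i}$, and is constant on each interval delimited by two consecutive values among the $\lambda_{i}$. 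Hence $v_{\lambda}^{\lambda'}=\mathrm{id}$ whenever $\lambda,\lambda'$ lie in a common such interval, and in general $v_{\lambda}^{\lambda'}$ factors as a composition of the finitely many inclusion-induced ``transition'' maps between consecutive distinct sublevel sets. Consequently it suffices to show that $H_{s}(\mathcal{F}_{\lambda})$ is finite-dimensional for each of these finitely many sets and each $s$: a linear map between finite-dimensional spaces automatically has finite rank, and finite rank is preserved under composition, so $\operatorname{rank}(v_{\lambda}^{\lambda'})<\infty$ for all $\lambda<\lambda'$.

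Second, to obtain the finite-dimensionality I would invoke the geometric regularity. By Proposition \ref{lemmaFiltEquiv}, for any $0<h<R_{\mu}$ the offset $\overline{B}_{2}(\mathcal{F}_{\lambda},h)$ deformation retracts onto $\mathcal{F}_{\lambda}$, so $H_{s}(\mathcal{F}_{\lambda})\cong H_{s}(\overline{B}_{2}(\mathcal{F}_{\lambda},h))$ for all $s$. It then remains to argue that this offset has finite-dimensional homology. The positive $\mu$-reach makes every level $0<r<R_{\mu}$ a regular value of the distance function $d_{\mathcal{F}_{\lambda}}$ (the generalized gradient satisfies $\|\nabla_{\mathcal{F}_{\lambda}}\|_{2}\geq\mu>0$ there), so by the critical-point theory for distance functions the offset $\overline{B}_{2}(\mathcal{F}_{\lambda},r)$ is a compact set with the homotopy type of a finite CW complex (equivalently, triangulable as a compact set of positive $\mu$-reach); in particular its homology is finitely generated in every degree and vanishes above degree $d$.

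The conclusion then follows: combining the two steps, every $H_{s}(\mathcal{F}_{\lambda})$ is finite-dimensional, hence every $v_{\lambda}^{\lambda'}$ has finite rank and each $\mathbb{V}_{f,s}$ is $q$-tame, which is exactly the assertion that $f$ is $q$-tame. I expect the main obstacle to be the second step: turning the qualitative $\mu$-reach regularity into a genuine finite-homotopy-type (or triangulability) statement for the offsets is the delicate point, since distance functions are only locally Lipschitz and one must appeal to Grove-type critical-point theory rather than smooth Morse theory; the reduction in the first step and the homological bookkeeping are routine by comparison.
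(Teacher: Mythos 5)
Your skeleton is largely sound, and its first parts are correct: by \textbf{A1}--\textbf{A2} the filtration $(\mathcal{F}_{\lambda})_{\lambda\in\mathbb{R}}$ takes only finitely many distinct values (a nice reduction, not needed in the paper's own argument but valid), and Proposition \ref{lemmaFiltEquiv} indeed yields $H_{s}(\mathcal{F}_{\lambda})\cong H_{s}\left(\overline{B}_{2}(\mathcal{F}_{\lambda},h)\right)$ for $0<h<R_{\mu}$, so $q$-tameness reduces to finite-dimensionality of the homology of finitely many sublevel sets. The genuine gap is your second step. Critical-point theory for distance functions (Grove-type deformation lemmas, weak feature size arguments) gives that inclusions between offsets taken across an interval of regular values are homotopy equivalences; it does \emph{not} give that any single offset has the homotopy type of a finite CW complex, which is what you actually need. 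Here it adds nothing beyond what Proposition \ref{lemmaFiltEquiv} already gives (all small offsets are homotopy equivalent to $\mathcal{F}_{\lambda}$), and homotopy equivalence alone carries no finiteness. To extract finite generation along your lines you would have to invoke Fu's theorem that the offset at a regular value is a compact Lipschitz manifold with boundary, and then that compact Lipschitz manifolds, being compact ANRs, have finitely generated homology --- heavy machinery that your argument neither proves nor precisely cites. Your parenthetical alternative, ``triangulable as a compact set of positive $\mu$-reach,'' is also unsupported: the offset is not shown to have positive $\mu$-reach, and triangulability of such sets is not an available fact here.

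The paper sidesteps all of this with an elementary sandwich argument, which is the idea your proof is missing. Since $\mathcal{F}_{\lambda}$ is compact and $[0,1]^{d}$ admits arbitrarily fine triangulations, there is a \emph{finite} simplicial complex $K$ with $\mathcal{F}_{\lambda}\subset K\subset \overline{B}_{2}(\mathcal{F}_{\lambda},h)$. The inclusion-induced map $i_{\lambda,h}:H_{s}(\mathcal{F}_{\lambda})\rightarrow H_{s}\left(\overline{B}_{2}(\mathcal{F}_{\lambda},h)\right)$ then factors through the finite-dimensional space $H_{s}(K)$, hence has finite rank. Composing with the retraction-induced map $F^{*}_{\lambda,h}$, which satisfies $F^{*}_{\lambda,h}\circ i_{\lambda,h}=\operatorname{id}$, shows at once that $H_{s}(\mathcal{F}_{\lambda})$ is finite-dimensional and that $v_{\lambda}^{\lambda^{'}}=v_{\lambda}^{\lambda^{'}}\circ F^{*}_{\lambda,h}\circ i_{\lambda,h}$ has finite rank; no knowledge of the global topology of the offset is required. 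If you replace your appeal to critical-point theory by this sandwich (or by a precise citation of Fu's Lipschitz-manifold theorem together with ANR theory), your proof closes; as written, the key finiteness claim is asserted rather than proved.
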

\begin{proof}
 Let $s\in\mathbb{N}$ and $\mathbb{V}_{s,f}$ the persistence module (for the $s-$th homology) associated to the sublevel filtration, $\mathcal{F}$ and for fixed levels $\lambda<\lambda^{'}$ let denote $v_{\lambda}^{\lambda^{'}}$ the associated map. Let $h<R_{\mu}$ and denote $i_{\lambda,h}: H_{s}\left(\mathcal{F}_{\lambda}\right)\rightarrow H_{s}\left(\overline{B}_{2}(\mathcal{F}_{\lambda},h)\right)$. By Lemma \ref{lemmaFiltEquiv}, we have, $$v_{\lambda}^{\lambda^{'}}=v_{\lambda}^{\lambda^{'}}\circ F^{*}_{\lambda,h}\circ i_{\lambda,h}.$$
By assumption \textbf{A1} and $\textbf{A2}$, $\mathcal{F}_{\lambda}$ is compact. As $[0,1]^{d}$ is triangulable,  $\mathcal{F}_{\lambda}$ is covered by finitely many cells of the triangulation, and so there is a finite simplicial complex $K$ such that $\overline{\mathcal{F}_{\lambda}}\subset K \subset \mathcal{F}_{\lambda}^{h}$. Consequently, $i_{\lambda,h}$ factors through the finite dimensional space $H_{s}(K)$ and is then of finite rank by Theorem 1.1 of \cite{Crawley2012}. Thus, $v_{\lambda}^{\lambda^{'}}$ is of finite rank. Hence, $f$ is $q$-tame.
\end{proof}
\begin{prp}
\label{qtamenessEstim}
Let $f:[0,1]^{d}\rightarrow\mathbb{R}$ and $h>0$ then, for all $s\in\mathbb{N}$, $\widehat{\mathbb{V}}_{s,f}$ is $q$-tame. 
\end{prp}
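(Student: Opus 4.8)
The plan is to prove the stronger statement that $\widehat{\mathbb{V}}_{s,f}$ is \emph{pointwise finite-dimensional}, i.e.\ that each vector space $\operatorname{Im}(\rho_{\lambda})$ is finite-dimensional; $q$-tameness then follows immediately, since any linear map between finite-dimensional spaces has finite rank, so that $\operatorname{rank}(\widehat{v}_{\lambda}^{\lambda'})<\infty$ for every $\lambda<\lambda'$ (cf.\ Definition \ref{def: q-tame}). As $\operatorname{Im}(\rho_{\lambda})$ is a subspace of $H_{s}(\widehat{\mathcal{F}}_{\lambda}^{\lceil r_{2}\rceil h})$, it therefore suffices to show that this homology group is finite-dimensional.

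First I would check that every set appearing in the construction is a finite union of cubes of the grid $G_{h}$. The set $\bigcup_{H\in C_{h,\lambda}}H$ is by definition such a union, and $C_{h,\lambda}\subset C_{h}$ with $|C_{h}|=(1/h)^{d}$ finite. The only point to verify is that the thickening operation $A\mapsto A^{b}$ preserves this property whenever $b$ is an integer multiple of $h$: for the $\ell^{\infty}$-distance, thickening a single grid cube $\prod_{i}[a_{i},a_{i}+h]$ by $kh$ yields the box $\prod_{i}[a_{i}-kh,a_{i}+(k+1)h]$, again a union of grid cubes, and thickening commutes with finite unions. Since $\lceil r_{1}\rceil h$ and $\lceil r_{2}\rceil h$ are integer multiples of $h$, it follows that $\widehat{\mathcal{F}}_{\lambda}$ and $\widehat{\mathcal{F}}_{\lambda}^{\lceil r_{2}\rceil h}$ are finite unions of cubes of $G_{h}$ (intersected with $[0,1]^{d}$), hence geometric realizations of finite cubical complexes.

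A finite cubical complex is a finite CW-complex, so its singular homology with coefficients in a field is finite-dimensional in every degree. In particular $\dim H_{s}(\widehat{\mathcal{F}}_{\lambda}^{\lceil r_{2}\rceil h})<\infty$, whence the subspace $\operatorname{Im}(\rho_{\lambda})$ is finite-dimensional, and the argument is complete.

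I do not anticipate a genuine obstacle: the entire content lies in the elementary observation that the construction never leaves the finite world of grid cubes. The one step deserving care is the bookkeeping for the thickening, namely confirming that the thickened sets remain finite cubical complexes (or, more weakly, compact triangulable sets with finite Betti numbers), after which finite-dimensionality is automatic. An even shorter route, which I would record as a remark, bypasses the homology computation altogether: since $C_{h,\lambda}$ is nondecreasing in $\lambda$ and contained in the finite set $C_{h}$, the assignment $\lambda\mapsto\widehat{\mathcal{F}}_{\lambda}$ (and its thickening) takes only finitely many distinct values, so $\widehat{\mathbb{V}}_{s,f}$ is a finite ``staircase'' module and is trivially $q$-tame.
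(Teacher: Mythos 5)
Your main argument is correct and is essentially the paper's own proof: the paper likewise observes that $\widehat{\mathcal{F}}_{\lambda}$ and $\widehat{\mathcal{F}}_{\lambda}^{\lceil r_{2}\rceil h}$ are finite unions of cubes of the grid $G_{h}$, concludes that their $s$-th homology, hence the subspace $\operatorname{Im}(\rho_{\lambda})$, is finite-dimensional, and deduces $q$-tameness. Your write-up simply makes explicit the bookkeeping the paper leaves implicit, namely that $\ell^{\infty}$-thickening by an integer multiple of $h$ keeps one inside the world of grid cubes.

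However, you should drop (or repair) the closing remark. The ``staircase'' observation alone does not prove $q$-tameness: a persistence module that takes only finitely many distinct values can still fail to be $q$-tame when those values are infinite-dimensional. For instance, the constant module $\mathbb{V}_{\lambda}=V$ with $V$ infinite-dimensional and identity structure maps takes a single value, yet $\operatorname{rank}\left(v_{\lambda}^{\lambda'}\right)=\infty$ for every $\lambda<\lambda'$. So finiteness of the number of steps does not ``bypass the homology computation''; the finite-dimensionality of $H_{s}$ of a finite cubical complex is the actual content of the proof, exactly as in your main argument.
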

\begin{proof}
The results also follows from by Theorem 1.1 of \cite{Crawley2012}. Let $h>0$ and $\lambda\in \mathbb{R}$. $\widehat{\mathcal{F}}_{\lambda}$ and $\widehat{\mathcal{F}}_{\lambda}^{\lceil r_{2}\rceil h}$ are unions of cubes of $C_{h}$ and thus finite dimensional. Hence, $\operatorname{Im}\left(\rho_{\lambda}\right)$ is finite dimensional. Thus $\widehat{\mathbb{V}}_{s,f}$ is $q$-tame.
\end{proof}
\section{Element of proof for Lemma \ref{deformation retract}}
\label{appendix : def retract}
This section is dedicated to the proof of Lemma \ref{deformation retract}.
\begin{proof}[Proof of Lemma \ref{deformation retract}]
The first part of the claim is Theorem 12 of \cite{kim2020homotopy}. A standard technique to construct deformation retraction in differential topology is to exploit the flow coming from a smooth underlying vector field. In \cite{kim2020homotopy}, their idea is to use the vector field defined on  $\overline{B}_{2}(K,r)\setminus K$, by $W(x)=-\nabla_{K}(x)$. But this vector field is not continuous. To overcome this issue, they construct a locally finite covering $(U_{x_{i}})_{i\in \mathbb{N}}$ of $\overline{B}_{2}(K,r)\setminus K$ and an associated partition of the unity $(\rho_{i})_{{i\in \mathbb{N}}}$, such that $\overline{W}(x)=\sum_{i\in \mathbb{N}}\rho_{i}(x)w(x_{i})$ shares the same dynamic as $W$. More precisely, they show that $\overline{W}$ induces a smooth flow $C$ that can be extended on $\overline{B}_{2}(K,r)\times [0,+\infty[$ such that for all $x\in \overline{B}_{2}(K,r)$, for all $t\geq 2d_{K}(x)/\mu^{2}$, $C(x,t)=C(x,2d_{K}(x)/\mu^{2})\in K$. We make an additional remark, denotes $d_{C}$ the arc length distance along $C$, as $||W||\leq 1$, we have,
\begin{align}
d_{C}(x,C(x,2r/\mu^{2}))&=\int_{0}^{2r/\mu^{2}}\left|\frac{\partial }{\partial t}C(x,t)\right|dt\nonumber\\
&\leq\int_{0}^{2r/\mu^{2}}||W(C(x,t)||_{2}dt\nonumber\\
&\leq 2d_{K}(x)/\mu^{2}\label{arc length bound}
\end{align} Thus for all $t\in[0,+\infty[$, $||x-C(x,t)||_{2}\leq 2r/\mu^{2}$. Now taking, $F(x,s)=C(x,2rs/\mu^{2})$, provide a deformation retract of $\overline{B}_{2}(K,r)$ onto $K$ and the associated retraction $R: x\mapsto F(x,1)$ verifies, $R(x)\in \overline{B}_{2}(x,2d_{K}(x)/\mu^{2})$ by (\ref{arc length bound}).
\end{proof}
\end{document}